\newtheorem{thm}{Theorem}[section]
\newtheorem{cor}[thm]{Corollary}
\newtheorem{lem}[thm]{Lemma}
\newtheorem{prop}[thm]{Proposition}
\newtheorem{ques}{Question}
\theoremstyle{definition}
\theoremstyle{remark}
\newtheorem{rem}[thm]{Remark}
\numberwithin{equation}{section}
\newcommand{\set}[1]{\left\{#1\right\}}
\definecolor{red}{rgb}{0.7,0,0}
\newcommand{\Real}{\mathbb R}
\newcommand{\func}[1]{\ensuremath{\operatorname{#1} } }
\newcommand{\Div}[0]{\func{div}}
\newcommand{\re}[0]{\func{Re}}
\newcommand{\tr}[0]{\func{tr}}
\newcommand{\xX}[0]{\mathbf{x}}
\newcommand{\eE}[0]{\mathbf{e}}
\newcommand{\FF}[0]{\mathbf{F}}
\newcommand{\n}{\mathbf{n}}
\newcommand{\x}{\mathbf{x}}
\newcommand{\R}{\mathbb{R}}
\newcommand{\I}{i}
\newcommand{\wW}[0]{\mathbf{w}}
\newcommand{\nN}[0]{\mathbf{n}}
\newcommand{\gnabla}[0]{\tensor[^g]{\nabla}{}}
\newcommand{\rgnabla}{\tensor[^g]{\mathring{\nabla}}{}}
\newcommand{\Ric}[0]{\func{Ric}}
\renewcommand{\d}{\mathrm{d}}
\newcommand{\vol}{\mu_g}
\title[Characterizing Minimal Surfaces]{Characterizing classical minimal surfaces via the entropy differential}
\author[J. Bernstein \and T. Mettler]{Jacob Bernstein \and Thomas Mettler}
\thanks{The first author was partially supported by the NSF grants DMS-0902721 and DMS-1307953 and by the EPSRC Programme Grant 
``Singularities of Geometric Partial Differential Equations'' number EP/K00865X/1. The second author was partially supported by the 
Mathematical Sciences Research Institute in Berkeley and by the  Forschungsinstitut f\"ur Mathematik at ETH Z\"urich.}
\subjclass[2010]{53A10, 70S10}
\keywords{Minimal surfaces, conservation laws, Schwarzian derivative}
\date{August 11, 2016}
\address{Department of Mathematics, Johns Hopkins University, Baltimore, MD 21218, USA}
\email{bernstein@math.jhu.edu}
\address{Institut f\"ur Mathematik, Goethe-Universit\"at Frankfurt, 60325 Frankfurt am Main, Germany}
\email{mettler@math.uni-frankfurt.de}
\begin{document}

\begin{abstract}
We introduce on any smooth oriented minimal surface in Euclidean $3$-space a meromorphic quadratic differential, $P$, which we call the \emph{entropy differential}. This differential arises naturally in a number of different contexts.  Of particular interest is the realization of its real part as a conservation law for a natural geometric functional -- which is, essentially, the entropy of the Gauss curvature.  We characterize several classical surfaces -- including Enneper's surface, the catenoid and the helicoid -- in terms of $P$.  As an application, we prove a novel curvature estimate for embedded minimal surfaces with small entropy differential and an associated compactness theorem.
\end{abstract}
\maketitle

\section{Introduction}
Let $\Sigma \subset \R^3$ be a smooth, oriented minimal surface. In this paper, we introduce a meromorphic quadratic differential $P$ on $\Sigma$, which we call the \emph{entropy differential}. We use $P$ to characterize several classical surfaces -- including Enneper's surface, the catenoid and the helicoid. In particular, subsets of Enneper's surface are the only minimal surfaces on which $P$ vanishes -- a fact which we use to prove a novel curvature estimate for embedded minimal surfaces with small entropy differential and an associated compactness result. 

The differential $P$ arises naturally in a number of different contexts.  Of particular interest is the realization of $T=\re P$, which we call the \emph{entropy form}, as a conservation law for the diffeomorphism invariant functional
\begin{equation*}
 \mathcal{E}[g]=\int_{\Sigma} K_g \log K_g \vol.
\end{equation*}
 This functional, which is a type of entropy for the curvature, has been previously considered by R. Hamilton in the context of the Ricci flow on surfaces ~\cite{Hamilton2Sphere}.  In particular, we show that if $g$ is a minimal surface metric (i.e. the metric induced by a smooth minimal immersion) for which $K_g\neq 0$, then the metric $\hat{g}=(-K_g)^{3/4} g$ is a critical point of $\mathcal{E}$ with respect to compactly supported conformal deformations.  The crucial fact used here is the observation -- due to Ricci~\cite{Ricci} -- that such minimal surface metrics satisfy the so-called \emph{Ricci condition}:
\begin{equation*}
 \Delta_g \log |K_g|=4K_g.
\end{equation*}
The differential $P$ also arises as a certain geometric Schwarzian derivative of the Gauss map -- a point of view which has antecedents in~\cite{DurenEtAl,KusnerConversation} -- and which we will study more thoroughly in a forthcoming paper~\cite{jacobthomasdegree}.  

A key observation of the present paper is that, modulo rigid motions, a minimal surface is determined, up to a three-parameter 
family, by its Hopf differential $Q$ and its entropy differential $P$.  This allows one to characterize several classical minimal surfaces 
in terms of simple relationships between the Hopf and entropy differentials:
\setcounter{section}{4}
\begin{thm}\label{IntroMainCharacterizationThm}
 Let $\Sigma$ be a smooth oriented non-flat minimal surface in $\Real^3$ with entropy differential $P$. We have:
\begin{enumerate}
 \item \label{IntroEnneperCase} If $P\equiv 0$, then up to a rigid motion and homothety, $\Sigma$ is contained in Enneper's surface;
 \item \label{IntroCatCase} If $\lambda \neq 0$ and  $P\equiv \lambda Q$, then,  up to a rigid motion and homothety, $\Sigma$ is contained in a surface $C\in\mathcal{C}$.  If $\Sigma$ is properly embedded, then it is the catenoid;
\item \label{IntroHelCase} If $\lambda \neq 0$ and $P \equiv  \I \lambda Q$, then, up to a rigid motion and homothety, $\Sigma$ is contained  in a surface $H\in \mathcal{H}$.  If $\Sigma$ is properly embedded, then it is the helicoid.
\end{enumerate}
\end{thm}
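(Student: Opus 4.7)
The strategy is to apply the determination principle stated just before the theorem: modulo rigid motions, and in view of the three-parameter ambiguity remarked there, a non-flat minimal surface is essentially recovered from its pair of differentials $(Q, P)$. The proof then naturally splits into a direct calculation on the candidate surfaces and an application of this rigidity.

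First I would verify that each surface listed in the theorem satisfies the indicated proportionality. Working in the Weierstrass representation with data $(g,\omega)$, one has explicit formulas for $Q$ and $P$ (the formula for $P$ being the one established in the section on the Weierstrass viewpoint). Plugging in $(g,\omega)=(z,\d z)$ on $\mathbb{C}$ for Enneper should yield $P\equiv 0$; the standard data for the catenoid should yield $P\equiv \lambda Q$ with $\lambda\in\R\setminus\{0\}$; and since the helicoid is the conjugate minimal surface of the catenoid, its Hopf differential is rotated by $\I$ relative to the catenoid's while $P$ transforms covariantly, giving $P\equiv \I\lambda Q$. Extending this computation over the one-parameter associate family in each case produces $\mathcal{C}$ and $\mathcal{H}$ and confirms they satisfy the conditions claimed in parts~(2) and~(3).

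For the converse, suppose $\Sigma$ is a non-flat minimal surface satisfying, say, $P\equiv 0$. Since this condition is invariant under rigid motion and homothety, the determination principle forces $\Sigma$ to lie, up to rigid motion and homothety, within a three-parameter family of minimal surfaces with $P\equiv 0$. Because appropriate patches of Enneper's surface already realize such a three-parameter family, this exhausts the possibilities and $\Sigma$ must be contained in Enneper's surface. The treatments of cases (2) and (3) are parallel, the three-parameter family now being $\mathcal{C}$ or $\mathcal{H}$ respectively. For the properly embedded statements, I would invoke the classical classification results: the catenoid is the only properly embedded surface in its associate family (as follows from finite-total-curvature arguments of L\'opez--Ros type), and the helicoid is the unique properly embedded nonplanar one in its associate family (Meeks--Rosenberg).

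The principal technical obstacle is verifying that the three-parameter ambiguity produced by the determination principle really is absorbed by rigid motion, homothety, and membership in $\mathcal{C}$ or $\mathcal{H}$; concretely, one must analyze the Schwarzian-type ODE for the Gauss map implied by the prescribed $(Q,P)$ and identify its solution space with the stated family. A secondary subtlety is the passage from local to global: the non-flatness hypothesis ensures that $Q$ and $P$ have only isolated zeros, and one must argue via analytic continuation across these zeros that the local conclusion holds on all of $\Sigma$.
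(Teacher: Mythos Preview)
Your overall strategy matches the paper's---reduce to the Hill's equation via the determination principle and classify solutions modulo the $\mathrm{SL}(2,\mathbb{C})/\mathrm{SU}(2)$ ambiguity---but there is a genuine misidentification that breaks the argument as written.

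The families $\mathcal{C}$ and $\mathcal{H}$ are \emph{not} associate families. The entropy differential $P$ depends only on the induced metric and is therefore constant along the associate family, while $Q$ rotates by $e^{i\theta}$; hence the associate surface of the catenoid at angle $\theta$ satisfies $P=\tfrac{1}{2}e^{-i\theta}Q$, and only $\theta=0,\pi$ give a real proportionality constant. In the paper $\mathcal{C}$ and $\mathcal{H}$ are obtained instead by post-composing the Gauss maps of the catenoid and helicoid with the M\"obius transforms $B_t\colon z\mapsto (t+z)/(1-tz)$, which genuinely changes the metric. Your claim that ``extending this computation over the one-parameter associate family'' recovers $\mathcal{C}$ and $\mathcal{H}$ is therefore false, and consequently the L\'opez--Ros and Meeks--Rosenberg theorems are aimed at the wrong objects.

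This also feeds a dimension mismatch you leave unresolved: $\mathcal{C}$ and $\mathcal{H}$ are \emph{one}-parameter families, whereas the determination principle yields a three-real-parameter space $\mathrm{SL}(2,\mathbb{C})/\mathrm{SU}(2)$ of solutions. The paper closes this gap by exploiting that the relevant Hill's equation $w''=\tfrac{\alpha^2}{4}w$ has constant coefficients, so translations $z\mapsto z+\tau$ with $\tau\in\mathbb{C}$ act on the solution space. After a $QL$-factorization of $\mathrm{SL}(2,\mathbb{C})$ and a further $\mathrm{SU}(2)$ rotation, two of the three real parameters are absorbed by this translation action, leaving the single parameter $t=\tan\phi$ that indexes $\mathcal{C}$ or $\mathcal{H}$. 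The properly-embedded statement then follows by direct inspection of the explicit parametrizations $\FF_t^{\mathcal{C}}$ and $\FF_t^{\mathcal{H}}$ given in Section~\ref{CharSec}, which are visibly non-embedded for $t\neq 0$; no global classification theorem is invoked.

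One smaller correction to your ``secondary subtlety'': the relation $P=\mu Q$ does more than restrict $Q$ to isolated zeros. By Corollary~\ref{DoublePoleCor}, $P$ has a double pole at every zero of $Q$, so the proportionality forces $Q$ to have \emph{no} zeros on $\Sigma$; this is precisely what allows one to choose a coordinate with $Q=-\d z^2$ near every point.
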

The families $\mathcal{C}$ and $\mathcal{H}$ are, respectively, the \textit{deformed catenoids} and \textit{deformed helicoids}.  These are one parameter families of surfaces containing, respectively, the catenoid and the helicoid -- their geometry is discussed thoroughly in Section \ref{CharSec}.  

A consequence of Item \eqref{IntroEnneperCase} of Theorem \ref{IntroMainCharacterizationThm} are a family of novel curvature estimates for 
embedded 
minimal surfaces.  Namely, we introduce a certain family of scale invariant quantities which measure the size of the  entropy form and use 
standard blow-up arguments to derive curvature bounds for embedded surfaces for which these 
quantities are small.  Specifically, for a constant $\alpha>0$ and smooth minimal surface $\Sigma$ with entropy form $T$, we define:
\begin{equation*}
||T||_{\Sigma, \alpha}:=2^{\frac{1}{2(1+\alpha)}}\int_{\Sigma} |T|^{\frac{1}{1+\alpha}}_g |K_g|^{\frac{\alpha}{\alpha+1}}  \vol.
\end{equation*}
We justify this family by noting that, on the one hand they are scale invariant and, on the other, the ``endpoints''  are 
very natural.  Indeed, 
$$
\lim_{\alpha\to \infty} ||T||_{\Sigma, \alpha}=\int_{\Sigma} |K_{g}| \vol,
$$ i.e., one endpoint is the total Gauss curvature, a well studied quantity in minimal surface theory.
While,
$$
\lim_{\alpha\to 0} ||T||_{\Sigma, \alpha}=\sqrt{2} \int_{\Sigma} |T|_g \vol,
$$
i.e., the other endpoint is the $L^1$ norm of the entropy form which is invariant under the standard 
action of $\rm PSL(2,\mathbb{C})$ on the Gauss map of  $\Sigma$, see \cite{jacobthomasdegree}.  We will not 
deal directly with this quantity due to the fact that the presence of umbilic points tends to make it infinite. 

\setcounter{section}{5}
\setcounter{thm}{2}

As these quantities are scale invariant, standard blow-up arguments give the following curvature estimates:
\begin{thm}\label{IntroCurvEstThm}
 Given $\alpha>0$, there exist constants  $\epsilon=\epsilon(\alpha)>0$ and $C=C(\alpha)>0$ so that: if $\Sigma$ is a properly embedded 
minimal surface in $B_{2R}$ and 
$$||T||_{\Sigma,\alpha}<\epsilon,$$
 then
\begin{equation*}
 R^2 \sup_{B_{R}\cap \Sigma} |A|^2 \leq C^2.
\end{equation*}
\end{thm}
In Section \ref{GapSec}, we address the question of the best possible $\epsilon$ and some partial results are obtained.
In particular, we obtain the following identity:
\setcounter{thm}{9}
\begin{cor}
 Let $\Sigma$ be a non-flat properly immersed minimal surface in $\Real^3$ of finite total Gauss curvature with genus $g$ and $e$ 
embedded ends, 
 then
 \begin{align*}
 \lim_{\alpha\to 0} \alpha ||{T}||_{\Sigma, \alpha} &=\frac{\pi}{4}\left( 8+12 g+10(e-2)+ \sum_{p\in \mathcal{E}\bigcup 
\mathcal{U}}  
\frac{n(p)}{n(p)+1} \right).
 \end{align*}
 Here $\mathcal{E}$ is the set of ends and $n(p)\geq 0$ for $p\in \mathcal{E}$ is the order of branching of the 
end, i.e., the order of branching of the extension of the Gauss map to $p$, while $\mathcal{U}$ is the 
set of umbilic points and $n(p)\geq 1$ is the order of the umbilic point for $p\in \mathcal{U}$.
\end{cor}
This suggests that as $\alpha\to 0$
$$
\epsilon(\alpha)=\frac{2\pi}{\alpha} +o\left( \frac{1}{\alpha}\right).
$$
which would be sharp on the catenoid.
Using standard techniques, we observe that our curvature estimate gives a corresponding compactness results which we record in  Theorem 
\ref{CpctThm}.
We conclude the paper with Appendix \ref{RicciFlowSec}, wherein the entropy form is used to make a connection between minimal surfaces in $\Real^3$ and gradient Ricci soliton metrics on surfaces.

\setcounter{section}{1}

\subsection*{Acknowledgements}
The authors would like to thank Rob Kusner and Daniel Fox for several stimulating discussions regarding the topics of this paper. The authors are also grateful to the anonymous referee for carefully reading the article and many useful suggestions. 

\section{A Geometric Entropy Functional for Surfaces}

\subsection{Definitions}
We assume $\R^3$ to be equipped with the standard Euclidean metric $g_E$ and orientation. Let $B_{R}(p)$ be the open Euclidean ball in $\R^3$ with radius $R>0$ and center $p$. If $p$ is omitted then the ball is assumed to be centered at the origin in $\R^3$. Let $M$ be an open orientable smooth $2$-manifold. For a smooth immersion $\x : M \to \R^3$ let $\Sigma=\x(M)$, we say $\Sigma$ is properly embedded if $\x$ is proper and injective. Moreover, let $g=\x^*g_E$ be the first fundamental form.  We write $\gnabla$ for the Levi-Civita connection, $K_g$ for the Gauss curvature, and $\vol$ for the area form of $g$. The integrable almost complex structure on $M$ induced by $g$ and the orientation will be denoted by $J$. Furthermore, for $A \in \Gamma(S^2(T^*M))$, we define
$
(JA)(X,Y)=A(JX,Y)
$
where $X,Y \in \Gamma(TM)$ are smooth vector fields on $M$. Here, as usual, $S^2(T^*M)$ denotes the second symmetric power of the cotangent bundle of $M$. In particular, the map $A\mapsto A+i JA$ embeds the space of symmetric trace-free $2$-forms on $M$ into the space of quadratic differentials on $M$. Furthermore, we use the standard fact that $A+\I JA$ is holomorphic if and only if $A$ is divergence-free.
 
Let $\n$ denote the orientation compatible Gauss map of $\x$ taking values in $\partial B_1$, the unit-sphere in $\R^3$ centered at $0$. The second fundamental form of $\x$ will be denoted by $A$ and its trace with respect to $g$, the \textit{mean curvature}, by $H$. A point $p\in M$ at which the eigenvalues of $A_p$ agree is called \emph{umbilic} and we define $\hat{M}\subset M$ to be the open submanifold of non-umbilic points. 

The pair $(g,A)$ satisfies the Gauss equation
\begin{equation*}
 |A|_g^2+2K_g =(\tr_g\! A)^2
\end{equation*}
and the Codazzi equations
\begin{equation*}
 \gnabla_X A (Y,Z)=\gnabla_Y A(X,Z)
\end{equation*}
where $X,Y,Z\in \Gamma(TN)$. Conversely, Bonnet's theorem states that if a pair $(g,A)$ on a simply connected surface $N$ satisfies the Gauss - and Codazzi equations, then there exists an immersion $\x : N \to \R^3$ -- unique up to composition with a rigid motion of $\R^3$ -- whose first and second fundamental form are $g$ and $A$. For this reason we refer to the triple $(M,g,A)$ as \emph{geometric data} of $\x$. 

\subsection{The Ricci condition} We suppose from now on that $\x : M \to \R^3$ is minimal.  That is $H\equiv 0$. The Gauss equations imply that $K_g\leq 0$ and that $K_g$ is negative on $\hat{M}$. It follows from the Codazzi equations that the second fundamental form is divergence free with respect to $g$. This yields \textit{Simons' identity}
\begin{equation*}
 \Delta_g A =-|A|_g^2 A
\end{equation*}
where here $\Delta_g$ is the rough Laplacian. From 
$$
4 |A|_g^2 |\gnabla A|^2_g=|\gnabla |A|_g^2|^2,
$$
Simons' identity, and the Gauss equations we obtain that on $\hat{M}$ the following \textit{Ricci condition}
\begin{equation} \label{RicIdent}
 \Delta_g \log |{K}_g| = 4 K_g
\end{equation}
holds. Abbreviate 
\begin{equation}\label{eq:ugdef}
u_g=-\frac{1}{4}\log|K_g|,
\end{equation}
then the Ricci condition becomes
$$
\Delta_gu_g=e^{-4u_g}. 
$$

Conversely, Ricci~\cite{Ricci} showed that if $g$ is a Riemannian metric of strictly negative Gauss curvature $K_g$ on a simply connected 
$2$-manifold $N$ satisfying the Ricci condition, then there exists a minimal immersion $\x : N \to \R^3$ with $\x^*g_E=g$. A proof of this 
fact using modern language may be found in~\cite{MR655419}. 

\subsection{The entropy functional}
We will study a certain natural functional $\mathcal{E}$ defined  on the space $\mathcal{M}^+(M)$ of smooth Riemannian metrics on $M$ which have positive Gauss curvature. 
Define for $g\in \mathcal{M}^+(M)$
\begin{equation*}
\mathcal{E}[g]= \int_M K_g \log K_g \vol
\end{equation*}
This functional has been applied to the study of Ricci flow on surfaces by Hamilton~\cite{Hamilton2Sphere} and Chow~\cite{Chow1991} -- in particular Hamilton observed that it is monotonically increasing along the Ricci flow on spheres with positive Gauss curvature (see Appendix~\ref{RicciFlowSec} of the present paper for additional connections to Ricci solitons).

We compute the Euler-Lagrange equations associated to $\mathcal{E}$.
To do so, let $f$ be a smooth symmetric $2$-form and write $F=\tr_g f$ for its trace with respect to $g$. Let $g_t=g+tf$, then (cf.~\cite[pg. 99]{ChowBook})
\begin{align*}
 \left.\frac{\partial}{\partial t}\right|_{t=0} K_{g_t} &= - \frac{1}{2} \Delta_g F + \frac{1}{2} \Div_g(\Div_g\! f)- \frac{1}{2} K_g F\\
&= - \frac{1}{4} \Delta_g F + \frac{1}{2} \Div_g\left(\Div_g\! \mathring{f}\right)- \frac{1}{2} K_g F
\end{align*}
where $\mathring{f}$ is the trace-free part of $f$.
Hence,
\begin{equation*}
\left.\frac{\partial}{\partial t}\right|_{t=0} K_{g_t} \log K_{g_t}  =  \left(-\frac{1}{4} \Delta_g F + \frac{1}{2} \Div_g\left(\Div_g\! \mathring{f}\right)- \frac{1}{2} K_g F\right) (\log K_{g_t}+1)
\end{equation*}
and so 
\begin{equation*}
 \delta_f\mathcal{E}[g]= \frac{1}{2}\int_{M}F K_g \log K_g+\left(\Div_g\left(\Div_g\! \mathring{f}\right)-\frac{1}{2} \Delta_g F - K_g F\right) (\log K_{g_t}+1)\vol.
\end{equation*}
If $f$ is compactly supported 
then using Green's formula and the divergence theorem, i.e., integrating by parts twice, yields 
\begin{equation*}
 \delta_f\mathcal{E} [g]= -\frac{1}{4}\int_{M} F \left(\Delta_g \log K_g +2  K_g   \right)  -2\langle f, \rgnabla^2 \log K_g\rangle_g \vol,
\end{equation*}
where $\rgnabla^2$ denotes the trace-free Hessian and $\langle a,b\rangle_g$ the natural bilinear pairing on elements $a,b\in \Gamma(S^2(T^*M))$ obtained via $g$. 

We will say that $g\in \mathcal{M}^+(M)$ is an $\mathcal{E}$-critical metric if $\mathcal{E}$ is stationary at $g$ with respect to compactly supported conformal deformations.
Hence,  $g$ is an $\mathcal{E}$-critical metric if and only if the Gauss curvature $K_g$ of $g$ satisfies
\begin{equation*}
 \Delta_g \log K_g = -2 K_g. 
\end{equation*}

As $\mathcal{E}$ is computed purely in terms of geometric quantities it is manifestly diffeomorphism invariant, that is
if $\phi: M \to M$ is a diffeomorphism we have
\begin{equation*}
 \mathcal{E}[\phi^* g] =\mathcal{E}[g].
\end{equation*}
By Noether's principle this invariance leads to a conservation law for $\mathcal{E}$-critical metrics.  Indeed, let $X$ be a compactly supported vector field on $M$ and $\phi_t$ the flow of $X$. We have that
\begin{equation*}
 \phi_t^* g=g+ t L_X g +o(t).
\end{equation*}
Recall, the Lie derivative, $L_X g$ can be computed as
\begin{equation*}
 (L_X g)(Y,Z)=g( \gnabla_Y X,Z) + g(\gnabla_Z X, Y).
\end{equation*}
where $Y,Z \in \Gamma(TM)$. By the diffeomorphism invariance we have at an $\mathcal{E}$-critical metric that
\begin{align*}
 0 &= \delta_{L_X g} \mathcal{E} [g] \\ &= \frac{1}{2} \int_{M} \langle L_X g, \rgnabla^2 \log K_g\rangle_g \vol\\
   &=\int_{M} \Div_g\! \left( (\rgnabla^2 \log K_g )(X, \cdot)\right) - \langle X, \Div_g\! \rgnabla^2 \log K_g \rangle_g \vol\\
   &=-\int_{M} \langle X, \Div_g\! \rgnabla^2 \log K_g \rangle_g \vol
\end{align*}
where we used that $X$ has compact support and the divergence theorem.
As $X$ is arbitrary,
\begin{equation}\label{divfree}
 \Div_g\! \rgnabla^2 \log K_g =0.
\end{equation}
In other words, the quantity
\begin{equation*}
V_g=\rgnabla^2 \log K_g  
\end{equation*}
 is a trace-free divergence free symmetric $2$-form, i.e.~a conservation law for the $\mathcal{E}$ functional.

\subsection{The entropy form}
Let $g$ be a smooth Riemannian metric and $\omega$ a smooth real-valued function on $M$. We note the following standard formula for the trace-free Hessian and the Laplacian operating on $u\in C^{\infty}(M)$
\begin{equation}\label{conformaltrafo}
\aligned
\Delta_{e^{2\omega}g}u&=e^{-2\omega}\Delta_gu,\\
^{e^{2\omega}g}\mathring{\nabla}^2&u=\rgnabla^2 u-\left(\d u \otimes \d \omega+\d \omega \otimes \d u-g(\gnabla u,\gnabla \omega)g\right).
\endaligned
\end{equation}
Also, the Gauss-curvature transforms under conformal change as
$$
K_{e^{2\omega}g}=e^{-2\omega}\left(K_g-\Delta_g \omega \right).  
$$
We let $\mathcal{R}^{\pm}_{C}$ denote the space of smooth positively (or negatively) curved Riemannian metrics on $M$ satisfying the \textit{generalized Ricci condition}
$$
\Delta_g\log|K_g|=CK_g
$$
for some real constant $C$. In particular, the $\mathcal{E}$-critical metrics are the elements of $\mathcal{R}^+_{-2}$. For $g \in \mathcal{R}^{\pm}_{C}$ and $\alpha \in \R$ let $g_{\alpha}=|K_g|^{2\alpha}g$, then $g_{\alpha}$ has Gauss-curvature
$$
K_{g_{\alpha}}=(1-C\alpha)|K_g|^{-2\alpha}K_g
$$
which, for $\alpha \neq \frac{1}{C}$, satisfies
\begin{equation*}
 \Delta_{g_{\alpha}} \log |K_{g_\alpha}| = \left(\frac{2\alpha -1}{\alpha-\frac{1}{C}}\right)K_{g_\alpha}=C_{\alpha}K_{g_{\alpha}},
\end{equation*}
where
$$
C_{\alpha}=\left(\frac{2\alpha -1}{\alpha-\frac{1}{C}}\right).
$$
It follows that for $\alpha>\frac{1}{C}$ the map $\varphi_{\alpha}$ sending $g$ to $g_{\alpha}$ satisfies
$$
\varphi_{\alpha} : \mathcal{R}^{\pm}_{C} \to \mathcal{R}^{\mp}_{C_{\alpha}}
$$
whereas for $\alpha < \frac{1}{C}$ 
$$
\varphi_{\alpha} : \mathcal{R}^{\pm}_{C} \to \mathcal{R}^{\pm}_{C_{\alpha}}.
$$
Note that the choice $\alpha=\frac{1}{C}$ maps the elements of $\mathcal{R}^{\pm}_{C}$ to flat metrics and the choice $\alpha=\frac{1}{2}$ (assuming $C\neq 2$) maps the elements of $\mathcal{R}^{\pm}_{C}$ to metrics of non-zero constant Gauss curvature.

Suppose $g \in \mathcal{R}^{-}_{4}$, then $\hat{g}=g_{3/8}$ is an $\mathcal{E}$-critical metric with Gauss curvature 
$$
K_{\hat{g}}=\frac{1}{2}|K_g|^{1/4}.
$$
It follows with~\eqref{eq:ugdef} and \eqref{divfree} that the symmetric trace-free \textit{entropy form} 
$$
\aligned
T&:=V_{\hat{g}}={}^{\hat{g}}\mathring{\nabla}^2\log K_{\hat{g}}={}^{\hat{g}}\mathring{\nabla}^2\left(\frac{1}{4}\log |K_g|\right)\\&=-{}^{\hat{g}}{\mathring{\nabla}}^2u_g=-\rgnabla^2u_g-3\left((\d u_g)^2-\frac{1}{2}g(\gnabla u_g,\gnabla u_g)g\right)
\endaligned
$$
is divergence-free with respect to $\hat{g}$.

If $M$ is oriented, then we call the associated quadratic differential $P:=T+\I J T$ the \textit{entropy differential}. Since the condition on a symmetric $2$-form on $M$ to be trace-free and divergence free is conformally invariant, we obtain: 
\begin{thm}
Let $(M,g)$ be a smooth oriented Riemannian $2$-mani\-fold with $K_g<0$ and $g$ satisfying the Ricci condition. Then the entropy differential  
$P=T+\I J T$ is holomorphic. 
\end{thm}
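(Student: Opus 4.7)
The plan is to assemble three ingredients already developed in the preceding discussion, so the argument becomes a short chain of implications plus one brief conformal check.

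First, I would identify $\hat g=|K_g|^{3/4}g$ as an $\mathcal{E}$-critical metric. The Ricci condition \eqref{RicIdent} says exactly that $g\in\mathcal{R}^-_{4}$, so applying the conformal rescaling $g_{\alpha}=|K_g|^{2\alpha}g$ with $\alpha=3/8$ gives $\hat g$; since $\alpha>1/C=1/4$, the formula
$$
C_{\alpha}=\frac{2\alpha-1}{\alpha-1/C}=-2
$$
places $\hat g\in \mathcal{R}^+_{-2}$. But $\mathcal{R}^+_{-2}$ is precisely the space of $\mathcal{E}$-critical metrics read off from the Euler--Lagrange computation carried out in the previous subsection, so $\hat g$ is $\mathcal{E}$-critical.

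Second, I would apply the Noether-type conservation law \eqref{divfree} at the $\mathcal{E}$-critical metric $\hat g$: this yields that $T=V_{\hat g}={}^{\hat g}\!\mathring{\nabla}^2\log K_{\hat g}$ is divergence-free with respect to $\hat g$, and $T$ is trace-free by construction. Next I would use that the trace-free divergence-free condition on symmetric $2$-forms is conformally invariant in dimension two: for any trace-free symmetric $2$-form $T$ and any conformal factor $e^{2\omega}$ one computes from \eqref{conformaltrafo} that $\Div_{e^{2\omega}g}T=e^{-2\omega}\Div_g T$, so divergence-freeness passes between $\hat g$ and $g$. Hence $T$ is also trace-free and divergence-free with respect to $g$.

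Finally, I would invoke the embedding $A\mapsto A+\I JA$ recorded in the definitions section, which sends trace-free divergence-free symmetric $2$-forms to holomorphic quadratic differentials on the Riemann surface $(M,J)$. Applied to $T$, this gives holomorphicity of $P=T+\I JT$.

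All three steps are essentially laid out in the text, so I expect no substantial obstacle; the only calculation not made literally explicit above is the conformal invariance of the divergence-free condition in dimension two, which is a routine consequence of \eqref{conformaltrafo}. The conceptual content is entirely in the first step---recognizing that the particular exponent $3/4$ in the definition of $\hat g$ is precisely what converts the Ricci condition into $\mathcal{E}$-criticality, so that Noether's theorem delivers the desired conservation law.
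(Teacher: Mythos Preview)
Your proposal is correct and follows essentially the same route as the paper: the text immediately preceding the theorem assembles exactly these three ingredients (the $\alpha=3/8$ rescaling sending $\mathcal{R}^-_4$ to $\mathcal{R}^+_{-2}$, the Noether identity \eqref{divfree} at the $\mathcal{E}$-critical metric $\hat g$, and conformal invariance of the trace-free divergence-free condition), and then simply states the theorem. One small quibble: the conformal invariance of $\Div$ on trace-free symmetric $2$-tensors in dimension two is not literally a consequence of \eqref{conformaltrafo}, which records only the transformation of the Laplacian and trace-free Hessian on functions; it is nonetheless a standard one-line Christoffel computation, so this does not affect the argument.
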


\begin{rem}
Note that a metric of negative Gauss-curvature on a surface arising via a constant mean curvature $2$ immersion into hyperbolic $3$-space $\mathbb{H}^3$ also satisfies the Ricci condition (recall that with our convention the `mean' curvature is the sum of the principal curvatures). Besides satisfying the Ricci condition, these so-called \textit{Bryant surfaces} share many properties with minimal surfaces in Euclidean $3$-space, the most important being that they possess a Weierstrass representation~\cite{bryant}. In particular, a quadratic differential similar to the one studied here has been defined for surfaces of constant mean curvature one in hyperbolic three-space $\mathbb{H}^3$ by Bryant~\cite{bryant} and for surfaces of Bryant type in the Lorentz-Minkowski four-space $\mathbb{L}^4$ by Aledo, Galvez and Mira~\cite{MR2200000}. 
\end{rem}

\subsection{The inverse problem}
Suppose we are given a Riemann surface $(M,J)$ and a holomorphic quadratic differential $P$ on $M$.  We ask whether we can locally find a $J$-compatible metric $g$ of negative Gauss curvature on $M$ which satisfies the Ricci condition and so that the entropy differential of $g$ is $P$. 

Let $z: V \to \mathbb{C}$ be local holomorphic coordinates on $(M,J)$. It is easy to check that if the real-valued function $u$ solves Liouville's equation
\begin{equation}\label{liouville}
4\partial^2_{z\bar z}u=e^{-2u},
\end{equation}
then the metric 
$
g=e^{2u}|\d z|^2  
$
satisfies the Ricci condition and $u_g=-\frac{1}{4}\log |K_g|=u$. Now a straightforward computation yields
$$
\aligned
\operatorname{Re}(P)=T&=-{}^{g_0}\mathring{\nabla}^2u-\d u^2 +\frac{1}{2}g_0({}^{g_0}\nabla u,{}^{g_0}\nabla u)g_0\\
&=-2\operatorname{Re}\left(\left(\partial^2_{zz}u+(\partial_z u)^2\right)\d z^2\right)
\endaligned
$$
where $g_0=|\d z|^2$. Writing $P=\frac{\rho}{2}\,\d z^2$ for some holomorphic function $\rho$ on $V$, we are thus interested in the solutions $u$ of the system
$$
4\partial^2_{z\bar z}u=e^{-2u}, \quad \partial^2_{zz} u+(\partial_z u)^2=-\frac{\rho}{4}.
$$
\begin{lem}\label{cplxode}
Let $V\subset \mathbb{C}$ be a simply-connected domain and $\rho$ a holomorphic function on $V$.  We let $z$ be the usual complex coordinate on $\mathbb{C}$.  Then there exist holomorphic functions $w_1, w_2$ on $V$ solving the equation
\begin{equation} \label{linqeqn}
 \partial_{zz}^2 w+\frac{\rho}{4} w=0
\end{equation} 
and with Wronskian satisfying 
\begin{equation} \label{WronskianCond}
W(w_1,w_2)=w_1 \partial_z w_2-w_2 \partial_z w_1=\frac{1}{2}.
\end{equation} 
If $\hat{w}_1, \hat{w}_2$ is another pair of holomorphic solutions to \eqref{linqeqn} satisfying \eqref{WronskianCond}, then there is a unique matrix $B \in \mathrm{SL}(2,\mathbb{C})$
so that $\hat{\wW}=B\wW$ where
\begin{equation*}
 \wW= \begin{pmatrix} w_1 \\ w_2 \end{pmatrix} \mbox{ and }\hat{\wW}=\begin{pmatrix} \hat{w}_1\\ \hat{w}_2 \end{pmatrix}.
\end{equation*}
\end{lem}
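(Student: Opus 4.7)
My plan is to package three classical facts about second-order linear holomorphic ODEs: existence on a simply connected domain via analytic continuation, constancy of the Wronskian, and the change-of-basis calculation for solution pairs.

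First, I will construct $w_1, w_2$. Fix a base point $z_0 \in V$. Since $\rho$ is holomorphic, a convergent power series ansatz (equivalently, Picard iteration applied to the first-order system for $(w, \partial_z w)$) produces, on any disk around $z_0$ contained in $V$, the unique holomorphic solution to \eqref{linqeqn} with any prescribed initial data $(w(z_0), \partial_z w(z_0))$. I will take $w_1, w_2$ to be determined by the initial data $(1, 0)$ and $(0, 1/2)$ respectively. To globalize, I will extend each $w_j$ along arbitrary paths in $V$; because $\rho$ is holomorphic on all of $V$ and $V$ is simply connected, the monodromy theorem ensures the continuation is single-valued, giving well-defined holomorphic functions $w_1, w_2 : V \to \mathbb{C}$.

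Second, I will check \eqref{WronskianCond}. A direct differentiation of $W(w_1, w_2) = w_1 \partial_z w_2 - w_2 \partial_z w_1$, using \eqref{linqeqn} to eliminate the second derivatives, yields $\partial_z W = -\frac{\rho}{4}(w_1 w_2 - w_2 w_1) = 0$. Hence $W$ is a constant, and evaluating at $z_0$ using the chosen initial data gives $W = \tfrac{1}{2}$. In particular $W \neq 0$, so $w_1, w_2$ are linearly independent, and since the solution space of \eqref{linqeqn} is two-dimensional they form a basis for it.

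Finally, for the $\mathrm{SL}(2, \mathbb{C})$ rigidity, any other holomorphic solution pair $\hat{w}_1, \hat{w}_2$ lies in this two-dimensional space, so there is a unique $B \in \mathrm{GL}(2, \mathbb{C})$ with $\hat{\wW} = B\wW$. Expanding $W(\hat{w}_1, \hat{w}_2)$ in terms of the entries of $B$ gives the standard identity $W(\hat{w}_1, \hat{w}_2) = \det(B)\, W(w_1, w_2)$, and imposing \eqref{WronskianCond} on both pairs forces $\det(B) = 1$.

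The statement is essentially classical ODE theory. The only step that is not a direct computation is the global existence, and there the key ingredient is the simple connectivity of $V$, which eliminates monodromy obstructions; I anticipate no real obstacle beyond being careful to cite this correctly.
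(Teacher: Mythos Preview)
Your argument is correct and is exactly the classical approach; the paper itself does not give a proof but simply cites Hille's text on ODEs in the complex domain, so your write-up is in fact more detailed than what appears there.
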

\begin{proof}
See for instance~\cite[Chapter 5.2]{Hille}.
\end{proof}

We now have the following:
\begin{prop} \label{SurfaceFromAandTProp}
 Let $V\subset \mathbb{C}$ be a simply-connected domain and suppose that $\rho$ is a holomorphic function on $V$. Then every real-valued function $u\in C^{\infty}(V)$ that satisfies the system
\begin{equation}\label{frobsys}
4 \partial^2_{z\bar{z}}u=e^{-2u}, \quad  \partial_{zz}^2 u+(\partial_z u)^2=-\frac{\rho}{4},
\end{equation}
is of the form
$$
 u_{\wW}=\log |\wW|^2=\log \left(|w_1|^2 +|w_2|^2\right), 
$$
where
\begin{equation*}
 \wW= \begin{pmatrix} w_1 \\ w_2 \end{pmatrix}
\end{equation*}
and $w_1, w_2$ satisfy \eqref{linqeqn} and \eqref{WronskianCond}.  Hence, for each $\rho$ there is a three-dimensional space $\mathrm{SL}(2,\mathbb{C})/\mathrm{SU}(2)$ of solutions $u$.
\end{prop}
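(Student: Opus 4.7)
My plan is to recognize that the substitution $\phi=e^{u}$ converts the nonlinear Riccati-type equation $\partial^2_{zz}u+(\partial_z u)^2=-\rho/4$ into the linear ODE \eqref{linqeqn} satisfied by $\wW$, and then use the Liouville equation plus the reality condition to identify $\phi$ with a Hermitian sesquilinear form in the fundamental solutions.

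\textbf{Step 1: Linearization of the second equation.} Set $\phi = e^{u}$. A direct computation gives $\partial_z^2\phi = \phi\bigl(\partial_z^2 u + (\partial_z u)^2\bigr) = -(\rho/4)\phi$, so $\phi$ satisfies the linear ODE \eqref{linqeqn} (with $\bar z$ serving merely as a parameter). Since the $z$-solution space is two-dimensional and spanned by the holomorphic $w_1,w_2$ provided by Lemma \ref{cplxode}, one can write
\begin{equation*}
\phi(z,\bar z) = A(\bar z)\,w_1(z) + B(\bar z)\,w_2(z)
\end{equation*}
for some smooth coefficient functions $A,B$ depending only on $\bar z$.

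\textbf{Step 2: Second linearization in $\bar z$.} Conjugating the equation $\phi_{zz}+(\rho/4)\phi=0$ and using that $u$ (hence $\phi$) is real shows that $\phi$ also satisfies $\phi_{\bar z\bar z}+(\bar\rho/4)\phi=0$. Plugging the expression from Step 1 into this and using the $\C$-linear independence of $w_1,w_2$, the coefficients $A,B$ themselves satisfy the conjugate ODE in $\bar z$ whose solution space is spanned by $\bar w_1,\bar w_2$. Hence there is a constant matrix $H=(H_{ij})\in M_2(\C)$ with
\begin{equation*}
\phi(z,\bar z) = \sum_{i,j=1}^{2} H_{ij}\,\bar w_i(z)\,w_j(z) = \wW^{*}H\wW.
\end{equation*}
Reality of $\phi$ forces $H^{*}=H$, and positivity of $\phi=e^{u}$ forces $H$ positive definite.

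\textbf{Step 3: Normalizing $H$ via the Liouville equation.} The Liouville equation $4\partial_{z\bar z}u=e^{-2u}$ is equivalent to $4(\phi\,\phi_{z\bar z}-\phi_z\phi_{\bar z})=1$. Computing the four derivatives of $\phi=\wW^{*}H\wW$ and organizing them as a Gram-type determinant with respect to the inner product $\langle v,w\rangle_{H}=v^{*}Hw$, one finds
\begin{equation*}
\phi\,\phi_{z\bar z}-|\phi_z|^{2} = \bigl|\det(\wW\mid\wW_z)\bigr|^{2}\det H = \tfrac{1}{4}\det H,
\end{equation*}
where I used the Wronskian normalization \eqref{WronskianCond}. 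Comparing forces $\det H=1$.

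\textbf{Step 4: Diagonalization and SU(2)-parameterization.} Any positive definite Hermitian $H$ with $\det H=1$ admits a factorization $H=B^{*}B$ with $B\in\mathrm{SL}(2,\C)$, and $B$ is unique up to left multiplication by $\mathrm{SU}(2)$. Setting $\hat\wW=B\wW$, the ODE \eqref{linqeqn} is preserved because $B$ is constant, the Wronskian scales by $\det B=1$, and $\phi=(B\wW)^{*}(B\wW)=|\hat\wW|^{2}$, so $u=\log|\hat\wW|^{2}=u_{\hat\wW}$ as required. Since $|U\wW|^{2}=|\wW|^{2}$ iff $U\in\mathrm{SU}(2)$, the solution space of $u$'s is parameterized by $\mathrm{SL}(2,\C)/\mathrm{SU}(2)$, which is three real-dimensional.

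The main technical obstacle I anticipate is Step 2 — carefully justifying that the coefficients $A(\bar z),B(\bar z)$ must themselves lie in the two-dimensional $\bar z$-solution space. This uses that $\rho$ is holomorphic (so $\bar\rho$ is antiholomorphic), and it relies on the simple-connectivity of $V$ so that the fundamental holomorphic solutions $w_1,w_2$ of Lemma \ref{cplxode} exist globally on $V$ and the representation $\phi=\wW^{*}H\wW$ holds with a single constant matrix $H$ rather than a locally varying one.
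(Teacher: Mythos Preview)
Your proof is correct and takes a genuinely different route from the paper. The paper argues via Frobenius integrability: it encodes the system \eqref{frobsys} as an integrable rank-$2$ distribution on $V\times\Real\times\mathbb{C}$, concludes that a solution is uniquely determined by the initial data $(u(z_0),\partial_z u(z_0))$ (a three-real-parameter family), and then asserts---with the details left to ``simple computations''---that the explicit $u_{\wW}$'s realize every choice of initial data. Your argument instead linearizes directly: the substitution $\phi=e^{u}$ turns the Riccati equation into Hill's equation \eqref{linqeqn} in $z$ and (by reality) in $\bar z$, forcing $\phi=\wW^{*}H\wW$ for a constant Hermitian $H$; the Liouville equation then pins down $\det H=1$, and the factorization $H=B^{*}B$ with $B\in\mathrm{SL}(2,\mathbb{C})$ yields $u=u_{B\wW}$. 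This is the classical ``Liouville formula'' mechanism and is more constructive: it exhibits $H$ (hence $\hat\wW$) explicitly from $u$ and makes the identification of the solution space with positive Hermitian unimodular matrices, i.e.\ $\mathrm{SL}(2,\mathbb{C})/\mathrm{SU}(2)$, transparent. The Frobenius approach, on the other hand, packages the compatibility of the overdetermined system more geometrically and generalizes readily to other exterior differential systems. Your flagged ``obstacle'' in Step~2 is not a real difficulty: defining $A,B$ by Cramer's rule from $\phi,\phi_z$ against $w_1,w_2$ and using $\phi_{zz}=-(\rho/4)\phi$, $w_i''=-(\rho/4)w_i$ gives $\partial_z A=\partial_z B=0$ directly, and then the $\bar z$-equation plus the pointwise independence of $w_1,w_2$ forces $A,B$ into the span of $\bar w_1,\bar w_2$.
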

\begin{proof}
On $X=V\times \R\times \mathbb{C}$ with coordinates $(z,u,q)$ consider the rank $2$ subbundle $E\subset TX$ defined by the common kernel of the $1$-forms
$$
\varphi_1=\d u -q \d z -\bar q\d \bar z, \quad \varphi_2=\d q +\left(\frac{\rho}{4}+q^2\right)\d z -\frac{1}{4} e^{-2u}\d \bar z.  
$$
Now
$$
\aligned
\d\varphi_1&=\d z\wedge \varphi_2+ \d\bar{z}\wedge \overline{\varphi_2},\\
\d\varphi_2&=-\frac{1}{2} e^{-2u} \d\bar{z}\wedge \varphi_1-2q \d z \wedge \varphi_2 ,\\
\endaligned
$$
hence $E$ is Frobenius integrable. Furthermore, the $1$-graph 
$$
z \mapsto (z,u(z),\partial_z u(z))
$$ of a solution $u$ to \eqref{frobsys} is an integral manifold of $E$. Consequently, a solution $u$ to \eqref{frobsys} is uniquely determined by specifying $u$ and $\partial_z u$ at some point $z_0 \in V$. Simple computations show that for any $(z_0,u_0,q_0) \in X$ there exist holomorphic map $\wW: V \to \mathbb{C}^2$ satisfying (\ref{linqeqn}, \ref{WronskianCond}) so that $u_{\wW}=\log |\wW|^2$ solves \eqref{frobsys} and satisfies 
$$
u(z_0)=u_0, \quad \partial_z u(z_0)=q_0. 
$$
Clearly, if $\hat{\wW}=U\wW$ for $U \in \mathrm{SU}(2)$, then $u_{\hat{\wW}}=u_{\wW}$.
\end{proof}
\begin{cor}\label{MinSurfaceFromHillsEqnCor}
Let $V\subset \mathbb{C}$ be a simply-connected domain and suppose that $\rho$ is a holomorphic function on $V$. Let  
\begin{equation*}
g_{\wW}=|\wW|^4 |\d z|^2 \quad \text{and} \quad A=\operatorname{Re}(\d z^2) 
\end{equation*}
where
\begin{equation*}
 \wW= \begin{pmatrix} w_1 \\ w_2 \end{pmatrix}
\end{equation*}
and $w_1, w_2$ satisfy \eqref{linqeqn} and \eqref{WronskianCond}. Then there is a minimal immersion $\xX_\wW:V\to \Real^3$ with geometric data $(V, g_{\wW}, A)$ and entropy differential $P=\frac{\rho}{2} \d z^2$.
\end{cor}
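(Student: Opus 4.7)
The plan is to check that the pair $(g_{\wW}, A)$ satisfies the Gauss--Codazzi equations for a minimal immersion and then to read off the entropy differential from the formula already derived in the inverse problem subsection.

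First, I would invoke Proposition~\ref{SurfaceFromAandTProp}: the function $u_{\wW}=\log|\wW|^2$ solves the Frobenius system \eqref{frobsys}. The first equation $4\partial^2_{z\bar z}u_{\wW}=e^{-2u_{\wW}}$ is Liouville's equation, so the metric $g_{\wW}=e^{2u_{\wW}}\,\d z\circ\d\bar z$ has Gauss curvature
$$
K_{g_{\wW}}=-4e^{-2u_{\wW}}\partial^2_{z\bar z}u_{\wW}=-e^{-4u_{\wW}}<0,
$$
and satisfies the Ricci condition \eqref{RicIdent}.

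Next, I would verify the Gauss and Codazzi equations for $(g_{\wW},A)$. Writing $z=x+\I y$ gives $A=\re(\d z^2)=\d x^2-\d y^2$, which is trace-free with respect to $g_{\wW}=e^{2u_{\wW}}(\d x^2+\d y^2)$; since $\d z^2=A+\I JA$ is a holomorphic quadratic differential on $V$, the observation in Section~2 (``$A+\I JA$ is holomorphic if and only if $A$ is divergence-free'') shows that Codazzi holds. A direct computation yields $|A|^2_{g_{\wW}}=2e^{-4u_{\wW}}$, whence
$$
|A|^2_{g_{\wW}}+2K_{g_{\wW}}=0=(\tr_{g_{\wW}}\!A)^2,
$$
which is the Gauss equation in the minimal case. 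Bonnet's theorem then produces a minimal immersion $\xX_{\wW}:V\to\R^3$ with geometric data $(V,g_{\wW},A)$, unique up to a rigid motion of $\R^3$.

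It remains to identify the entropy differential. Since $K_{g_{\wW}}=-e^{-4u_{\wW}}$, one has $u_{g_{\wW}}=-\tfrac14\log|K_{g_{\wW}}|=u_{\wW}$, and the formula for $\re(P)$ derived earlier in the subsection on the inverse problem becomes
$$
\re(P)=-2\re\!\left(\bigl(\partial^2_{zz}u_{\wW}+(\partial_z u_{\wW})^2\bigr)\,\d z^2\right).
$$
Applying the second equation of \eqref{frobsys} converts this into $\re(P)=\tfrac12\re(\rho\,\d z^2)$. Since both $P$ and $\tfrac{\rho}{2}\,\d z^2$ are holomorphic quadratic differentials with the same real part, they agree, giving $P=\tfrac{\rho}{2}\,\d z^2$ as required.

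The argument is essentially bookkeeping once Proposition~\ref{SurfaceFromAandTProp} is in hand; the only place requiring a little care is the interplay of the two equations of \eqref{frobsys}, the first yielding the Ricci condition (and hence the existence of a minimal realisation of $g_{\wW}$) and the second precisely matching the Hopf data $A=\re(\d z^2)$ to the prescribed differential $\tfrac{\rho}{2}\,\d z^2$. I do not anticipate any genuine obstacle.
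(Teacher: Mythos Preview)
Your argument is correct and follows exactly the route the paper intends: it is the spelled-out version of the paper's one-line proof, which simply cites Proposition~\ref{SurfaceFromAandTProp} together with the fundamental theorem of submanifold geometry (Bonnet). The only addition on your side is the explicit verification of the Gauss--Codazzi equations and of the identity $u_{g_{\wW}}=u_{\wW}$ before invoking the formula for $\re(P)$, which is precisely the bookkeeping the paper leaves implicit.
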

\begin{proof}
 This is an immediate consequence of Proposition \ref{SurfaceFromAandTProp} and the fundamental theorem of submanifold geometry.
\end{proof}

\section{Weierstrass Representation}\label{WeierstrassSec}
In this section we express the entropy differential $P$ in terms of the Weierstrass data of the minimal surface $\Sigma$ -- this allows us to compute $P$ more readily and to easily analyze its singular and asymptotic behavior.

\subsection{The Weierstrass Representation}
Recall, to an oriented minimal surface $\Sigma$ in $\Real^3$ with parametrization $\xX_\Sigma:M \to \Sigma$ one can associate \emph{Weierstrass data} which encodes the surface and parametrization $\xX_\Sigma$ in complex analytic data. More precisely, the Weierstrass data associated to $\xX_\Sigma$ is the quadruple $(M, J, G, \eta)$ where $(M,J)$ is a Riemann surface, $G$ is a meromorphic function on $(M,J)$ and $\eta$ a holomorphic one form on $(M,J)$.  The data is determined as follows:
\begin{enumerate}
 \item $J$ is the almost-complex structure induced by $\xX_\Sigma$;
\item $G=S\circ \nN$ where $\nN$ is the Gauss map and
$$S:\partial B_1 \backslash (0,0,-1)\to \mathbb{C}$$
 is stereographic projection;
\item $\xX_\Sigma^* \d x_3= \re \eta$.
\end{enumerate}
The Weierstrass data allows one to reconstruct $\xX_\Sigma$ by the means of the \emph{Weierstrass representation}:
\begin{equation} \label{WeierstrassRep}
 \xX_\Sigma(p)-\xX_\Sigma(p_0)= \re \int^p_{p_0} \left( \frac{1}{2} (G^{-1}-G), \frac{i}{2 }( G^{-1}+G) , 1\right) \eta.
\end{equation}
Conversely, given any quadruple $(M, J, G, \eta)$ we may use \eqref{WeierstrassRep} to construct a parametrization $\xX_\Sigma$ of a branched minimal surface $\Sigma$ provided:
\begin{enumerate}
 \item \label{WRCond1}Both $G \eta$ and $G^{-1} \eta$ are holomorphic;
 \item \label{WRCond2}For any 1-cycle $\gamma$  in $M$:
  \begin{equation*}
   \int_\gamma \left( \frac{1}{2} (G^{-1}-G), \frac{i}{2 }( G^{-1}+G) , 1\right) \eta\in i \Real^3.
  \end{equation*}
\end{enumerate}
Condition \eqref{WRCond2} is known as the period condition.
\begin{rem}
 The parametrizing map $\xX_\Sigma$ is an immersion if and only if $G\eta, G^{-1} \eta$, and $\eta$ do not all simultaneously vanish at any point of $M$.
\end{rem}

It is convenient to choose a local complex coordinate patch $(V, z)$ on $M$ and to write $\eta=h\d z$ and $G=G(z)$.  We write $f'$ for $\partial_z f$ for any function $f\in C^{1}(V, \mathbb{C})$.
Standard computations (see for instance~\cite{KarcherNotes}) give
the metric as
\begin{equation*}
 g=\xX_\Sigma^* g_E= \frac{1}{4} (|G|+|G|^{-1})^2 \eta \otimes \overline{\eta}=\frac{|h|^2}{4} (|G|+|G|^{-1})^2 |\d z|^2,
\end{equation*}
the Hopf differential as
\begin{equation*}
Q = -\frac{1}{G}\d G\circ \eta = -\frac{h G'}{G} \d z^2,
\end{equation*}
and the Gauss curvature
\begin{equation*}
  K_{g} =- \frac{16  |G G'|^2}{ |h|^2 (1+|G|^{2})^4}.
\end{equation*}
Hence,
\begin{equation*}
 u_g =-\log 2 -\frac{1}{4} \log |h^{-1} G G'|^2 + \log (1+|G|^2).
\end{equation*}

\subsection{Computing $P$ in terms of Weierstrass data}
We now compute the entropy differential $P$ in terms of the Weierstrass data.
\begin{prop} \label{WeierstrassCompProp}
 Let $\Sigma$ be an oriented minimal surface in $\Real^3$ with Weierstrass data $(M, J, G, \eta)$. If $(U, z)$ is a coordinate chart of $M$ on which  $K_g<0$ and we write $\eta=h\d z$, $G=G(z)$, then $P=\frac{\rho}{2}\d z^2$ with
\begin{align*}
 \rho&= \left(\frac{G'''}{G'}  +\frac{G''}{2G}-\frac{3(G')^2}{4G^2} -\frac{7(G'')^2}{4(G')^2}+\frac{G'' h'}{2G' h} -\frac{G' h'}{2Gh}  -\frac{h''}{h}+\frac{5(h')^2}{4h^2} \right).
\end{align*}
If $Q=\d z^2$, then
\begin{align*}
 P &= \left(\left(\frac{G''}{G'}\right)' -\frac{1}{2} \left( \frac{G''}{G'}\right)^2\right) \d z^2\\
   &= \lbrace G,z\rbrace \d z^2,
\end{align*}
where $\lbrace G, z \rbrace$ is the Schwarzian derivative of $G$.
\end{prop}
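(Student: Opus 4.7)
The plan is to compute $P$ directly from its characterization as the holomorphic quadratic differential whose real part is the entropy form $T$, working in the local holomorphic chart supplied by the Weierstrass data. Writing $P=\tfrac{\rho}{2}\,\d z^2$ and $T=2\re(T_{zz}\,\d z^2)$, one has $\rho=4T_{zz}$, so it suffices to compute the $(z,z)$-component of $T$. Starting from the intrinsic formula
\[
T = -\rgnabla^2 u_g - 3\bigl((\d u_g)^2-\tfrac12 g(\gnabla u_g,\gnabla u_g)\,g\bigr)
\]
established in Section 2 and the fact that in the Weierstrass chart the metric takes the form $g=e^{2\omega}\,\d z\circ\d\bar z$ with $\omega=\log|h|-\log|G|+\log(1+|G|^2)-\log 2$, the vanishing $g_{zz}=0$ implies that the trace-free and full Hessians share the same $(z,z)$-component, and the standard conformal formula gives $(\Hess_g u)_{zz}=\partial^2_{zz}u-2\omega_z\partial_z u$, so
\[
T_{zz} = -\partial^2_{zz}u_g + 2\omega_z\partial_z u_g - 3(\partial_z u_g)^2.
\]

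I would then substitute the explicit Weierstrass expression $u_g=-\log 2-\tfrac14\log|h^{-1}GG'|^2+\log(1+|G|^2)$ and decompose $\partial_z u_g=\alpha+\beta$ with holomorphic piece $\alpha=\tfrac{h'}{4h}-\tfrac{G'}{4G}-\tfrac{G''}{4G'}$ and non-holomorphic remainder $\beta=\bar G G'/(1+|G|^2)$, noting also that $2\omega_z=(h'/h-G'/G)+2\beta$. After expansion, the $\bar G$-dependent contributions to $T_{zz}$ assemble into $-\frac{\bar G G''}{1+|G|^2}+\beta\bigl(\tfrac{h'}{h}-\tfrac{G'}{G}-4\alpha\bigr)$, and a one-line check shows that the parenthetical quantity equals $G''/G'$, so the non-holomorphic part cancels identically. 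This both confirms the holomorphicity of $T_{zz}$ that is forced a priori by the Ricci condition and, upon collecting the remaining manifestly holomorphic terms and multiplying by $4$, yields the displayed formula for $\rho$.

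For the Schwarzian statement, the normalization $Q=\d z^2$ together with the Weierstrass formula $Q=-(hG'/G)\,\d z^2$ forces $h=-G/G'$, from which $h'/h=G'/G-G''/G'$ follows by differentiation and an analogous identity for $h''/h$ in $G$ alone is obtained similarly. Substituting these into the general formula for $\rho$ collapses the $h$-dependent terms, and after simplification one is left with $\rho=2G'''/G'-3(G''/G')^2=2\{G,z\}$; dividing by $2$ gives $P=\{G,z\}\,\d z^2$.

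The main obstacle is the sheer algebraic bookkeeping: the formula for $\rho$ has eight distinct monomials in $h$, $G$ and their derivatives, and each coefficient arises from several cross-terms in the expansion of $-\partial^2_{zz}u_g+2\omega_z\partial_z u_g-3(\partial_z u_g)^2$. There is nothing conceptually subtle beyond isolating the holomorphic/non-holomorphic split cleanly so that the cancellation of the $\bar G$-terms is transparent.
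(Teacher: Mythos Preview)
Your argument is correct. The key identity $T_{zz}=-\partial_{zz}^2 u_g+2\omega_z\,\partial_z u_g-3(\partial_z u_g)^2$ is right in any conformal chart $g=e^{2\omega}\d z\circ\d\bar z$, the holomorphic/non-holomorphic split is clean, and the cancellation $\tfrac{h'}{h}-\tfrac{G'}{G}-4\alpha=\tfrac{G''}{G'}$ kills the $\bar G$-terms exactly as you say; collecting the surviving piece $-4\alpha'+4\gamma\alpha-12\alpha^2$ (with $\gamma=\tfrac{h'}{h}-\tfrac{G'}{G}$) reproduces all eight coefficients of $\rho$, and the Schwarzian specialization follows.

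Your route, however, is not the one the paper takes. The paper does \emph{not} compute $T_{zz}$ directly in the Weierstrass chart. Instead it passes to an auxiliary coordinate $w$ with $\d w=\sqrt{-hG'/G}\,\d z$, so that $Q=\d w^2$; in this chart the conformal factor of $g$ is exactly $e^{2u_g}$, and the formula from Section~2.5 collapses to the two-term expression $P=-2\bigl(\partial_w^2 u_g+(\partial_w u_g)^2\bigr)\d w^2$. The paper then rewrites $\partial_w$ and $\partial_w^2$ in terms of $\partial_z$ via the chain rule and expands. What this buys is a shorter defining expression for $P$ (two terms rather than three), at the cost of carrying the Jacobian factor $\sqrt{-G/(hG')}$ through a second-order chain rule; your approach trades that coordinate change for the extra term $2\omega_z\,\partial_z u_g$ and the explicit verification that the non-holomorphic contributions cancel. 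The algebraic workload is comparable, but your method is arguably more self-contained since it never leaves the given chart and makes the holomorphicity of $T_{zz}$ visible as a computation rather than invoking it a priori.
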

\begin{rem}
 The Schwarzian derivative of $G$ has also been studied from a different perspective by Duren, Chuaqui and Osgood~\cite{DurenEtAl} (see also~\cite{MR1656822} for a coordinate free definition of the Schwarzian derivative).
\end{rem}
\begin{proof}
If  $K_g<0$ on $V$, then $\frac{h G'}{G}$ has no zeroes on $V$.  Hence, if $V$ is simply connected there is global square root of $-\frac{hG'}{G}$. Indeed,  there is a function $w$ on $V$ so that
\begin{equation*}
 \d w= \sqrt{-\frac{h G'}{G}} \d z
\end{equation*}
and so
\begin{equation*}
 Q=\d w^2.
\end{equation*}
The exact one-form $\d w$ is well-defined up to multiplication by $\pm 1$.
In particular, we have that the entropy differential is given by
\begin{equation*}
 P= -2 \left( \partial^2_{ww} u_g +(\partial_w u_g)^2\right) \d w^2.
\end{equation*}

In order to express $P$ in terms of the Weierstrass data we note that:
\begin{equation*}
 \partial_w = \sqrt{ -\frac{G}{h G'}} \partial_z
\end{equation*}
and so 
\begin{equation*}
 \partial_{ww}^2 = -\frac{G}{h G'} \partial^2_{zz}-\frac{G}{2 h G' } \left(\frac{G'}{G}-\frac{G'' }{G' } -\frac{h'}{h}\right) \partial_{z}.
\end{equation*}
Hence,
\begin{align*}
 \sqrt{- \frac{h G'}{G}}\partial_w u_g& =-\frac{1}{4}\frac{ (h^{-1} G G')'}{h^{-1} G G'} +\frac{ G' \bar{G}}{1+|G|^2}\\
                              &= \frac{1}{4}\left(\frac{h'}{h} -\frac{G'}{G} -\frac{G''}{G'} \right) +\frac{ G' \bar{G}}{1+|G|^2}
\end{align*}
and
\begin{align*}
 -\frac{h G'}{G}\partial^2_{ww} u_g=& \frac{1}{4} \left( \frac{h''}{h}-\left(\frac{h'}{h}\right)^2-\frac{G''}{G}+\left(\frac{G'}{G}\right)^2-\frac{G'''}{G'}+\left(\frac{G''}{G'}\right)^2\right)+\\&+\frac{G'' \bar{G}}{1+|G|^2}-\left(\frac{G' G}{1+|G|^2}\right)^2+\frac{1}{8} \left( \frac{G''}{G'}\right)^2-\frac{1}{8}\left( \frac{h'}{h}-\frac{G'}{G}\right)^2-\\& -\frac{1}{2}\left( \frac{G''}{G'}+\frac{h'}{h}-\frac{G'}{G} \right) \frac{ G' \bar{G}}{1+|G|^2}.
\end{align*}
We note that both these expressions are independent of replacing $w$ by $-w$ and so hold even if $V$ is not simply-connected.
Combining the above we determine that
$
P=\frac{\rho}{2}\,\d z^2
$
with
\begin{align*}
 \rho&= \left(\frac{G'''}{G'}  +\frac{G''}{2G}-\frac{3(G')^2}{4G^2} -\frac{7(G'')^2}{4(G')^2}+\frac{G'' h'}{2G' h} -\frac{G' h'}{2Gh}  -\frac{h''}{h}+\frac{5(h')^2}{4h^2} \right).
\end{align*}
as claimed.
If $Q=\d z^2$, then
\begin{equation*}
 h=-\frac{G}{G'}
\end{equation*}
and so
\begin{equation*}
 \frac{h'}{h}=\frac{G'}{G}-\frac{ G''}{G'}
\end{equation*}
 and
\begin{equation*}
 \frac{h''}{h}=-\frac{ G''}{G}+2\frac{ (G'')^2}{(G')^2}.
\end{equation*}
Plugging these into the formula for $P$ gives
\begin{align*}
  P &= \left( \left(\frac{G''}{G'}\right)'-\frac{1}{2}\left(\frac{G''}{G'}\right)^2\right) \d z^2.
\end{align*}
\end{proof}

As an application of the previous computation, we determine the behavior  of the entropy differential at umbilic points of $\Sigma$:
\begin{cor}\label{DoublePoleCor}
 If $\Sigma$ is a minimal surface in $\Real^3$ and $p\in \Sigma$ an isolated umbilic point, then $P$, the entropy differential of $\Sigma$, has a double pole at $p$.  Indeed, there is a complex coordinate $z$ around $p$ satisfying $z(p)=0$ and so
\begin{equation*}
 P= -\left( \frac{3n^2+4n}{8} \right) \frac{\d z^2}{z^2} +O(1),
\end{equation*}
where $n$ is the order of vanishing of the Hopf differential $Q$ at $p$.
\end{cor}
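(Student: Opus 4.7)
\emph{Proof plan.} The idea is to reduce to the Weierstrass computation of Proposition~\ref{WeierstrassCompProp} after placing both the Hopf differential $Q$ and the Weierstrass data into convenient normal forms near $p$. Since $Q$ is holomorphic with an isolated zero of order $n$ at $p$, a standard normal form argument lets me choose a local holomorphic coordinate $z$ with $z(p)=0$ for which $Q=z^n\,\d z^2$ identically. By composing with a rotation of $\R^3$ I may also arrange that the Weierstrass map $G$ is holomorphic and non-vanishing at $z=0$; the immersion condition then forces the coefficient $h$ (with $\eta = h\,\d z$) to be holomorphic and non-vanishing there, and the identity $Q=-(hG'/G)\,\d z^2$ gives $G'(z)=z^nH(z)$ with $H(0)\neq 0$.

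The first computational step is to determine the leading singular part of the Schwarzian $\{G,z\}$ near $z=0$. Substituting $G'=z^nH$, a short power-series expansion of $G''/G'$ and $G'''/G'$ will yield $\{G,z\}=-n(n+2)/(2z^2)+O(1/z)$. To pass from $\{G,z\}$ to $P$, I would introduce the natural parameter $w=\frac{2}{n+2}\,z^{(n+2)/2}$ of $Q$ (so that $\d w^2=Q$) on the punctured neighborhood of $p$, apply Proposition~\ref{WeierstrassCompProp} in the $w$-coordinate to obtain $P=\{G,w\}\,\d w^2$, and then use the classical Schwarzian chain rule $\{G,w\}=\{G,z\}(\d z/\d w)^2+\{z,w\}$ together with the elementary identity $\{w^\alpha,w\}=(1-\alpha^2)/(2w^2)$ applied to $z=c\,w^{2/(n+2)}$ to arrive at
\[
P=\{G,z\}\,\d z^2+\frac{n(n+4)}{8z^2}\,\d z^2.
\]
Combining with the previous expansion of $\{G,z\}$ then produces a double pole at $z=0$ with leading coefficient $-(3n^2+4n)/8$, modulo a simple-pole correction.

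The main delicate point is the use of the multivalued function $w$ across the branch point $z=0$; I would handle this by observing that both sides of the chain-rule identity are \emph{single-valued} meromorphic quadratic differentials on a punctured neighborhood of $p$ (because $(\d z/\d w)^2=z^{-n}$ and $\{z,w\}\,\d w^2$ each descend to the $z$-disk), and that both extend meromorphically across $p$, so the identity persists there. Finally, since the double-pole coefficient $-(3n^2+4n)/8$ is nonzero for $n\geq 1$ and is invariant under biholomorphic changes of coordinate fixing $p$, any residual simple-pole term can be absorbed by a further coordinate change of the form $z\mapsto z+\beta z^2$ with $\beta$ chosen appropriately, yielding the coordinate whose existence the corollary asserts.
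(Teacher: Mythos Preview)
Your argument is correct but takes a genuinely different route from the paper's. The paper normalizes the \emph{height differential} rather than the Hopf differential: after rotating so that $\nN(p)=\eE_1$, it chooses a coordinate with $\eta=\d z$ (i.e.\ $h\equiv 1$), expands $G(z)=1+az^{n+1}+bz^{n+2}+\cdots$, and then substitutes directly into the full eight-term expression for $\rho$ given in the first part of Proposition~\ref{WeierstrassCompProp}. You instead normalize $Q=z^n\,\d z^2$, invoke only the special Schwarzian case $P=\{G,w\}\,\d w^2$ of that proposition in the (branched) coordinate $w$ with $\d w^2=Q$, and transfer back via the Schwarzian chain rule. Both finish with the same $z\mapsto z+cz^2$ trick to kill the simple pole.

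What each approach buys: the paper's computation is mechanical once one has the long formula for $\rho$, and it sidesteps any multivaluedness issues entirely. Your route is more structural---it cleanly decomposes the leading singularity as the sum of the intrinsic piece $\{G,z\}=-\tfrac{n(n+2)}{2z^2}+O(z^{-1})$ and the purely coordinate-theoretic piece $\{z,w\}\,\d w^2=\tfrac{n(n+4)}{8z^2}\,\d z^2$, making transparent why the answer is a simple rational function of $n$---but the price is the descent argument you flag, checking that both sides of the chain-rule identity are single-valued on the punctured disk. That step is fine as you describe it, since $(\d z/\d w)^2=z^{-n}$ and the Schwarzian of a power are each well-defined meromorphic objects in $z$.
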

\begin{proof}
By rotating $\Sigma$ in $\Real^3$, we may assume that $\nN(p)=\eE_1$ where $(\eE_1,\eE_2,\eE_3)$ denotes the standard basis of $\R^3$. Hence, there is $p$-neighbor\-hood $V$ with a $p$-centered complex coordinate $z$, together with Weierstrass data $(V, J, G, \eta)$ parametrizing $\Sigma$ near $p$ which satisfies $\eta=\d z$ and $ G(z)=1+o(1)$. 
In fact, there are $a,b\in \mathbb{C}$ with $a\neq 0$ so that
\begin{equation*}
 G(z)=1+a z^{n+1} +b z^{n+2} +O(z^{n+2}),
\end{equation*}
because the umbilic point is isolated.
Indeed,
\begin{equation*}
 Q=-\frac{h G'}{G} \d z^2= -a (n+1) z^{n} \d z^2 +O(z^n)
\end{equation*}
and $n$ is the order of vanishing of $Q$ at $p$.

We let $V^*=V\backslash\set{p}$ and apply Proposition \ref{WeierstrassCompProp} to compute that
\begin{equation*}
 P= \frac{1}{2}\left( -\left(\frac{3}{4} n^2+n\right)z^{-2}-\frac{3}{2} \frac{n(n+2)}{n+1} \frac{b}{a} z^{-1} \right) \d z^2 +O(1).
\end{equation*}
However, by changing coordinates to $z\to z+c z^2$ for an appropriate choice of $c$ we obtain $P$ in the desired form.
\end{proof}
We may also use Proposition \eqref{WeierstrassCompProp} to compute the entropy differential at branch points.  
\begin{cor} \label{BranchPtCor}
 Suppose that $(M, J)$ is a Riemann surface and $\xX:M\to \Sigma\subset \Real^3$ is a non-flat branched minimal immersion. Let $p\in M$ be a branch point of $M$ of order $n$ and index $k$.   
\begin{enumerate}
 \item If $n-k+1\neq 0$, then the entropy differential, $P$, has a double pole at $p$ and there is a complex coordinate patch $(V,z)$ about $p$ with $z(p)=0$ so that
\begin{equation*}
 P=\left(\frac{(n+k+1)^2-4 k^2}{8 } \right)\frac{\d z^2}{z^2}+O(1);
\end{equation*}
\item If $n-k+1=0$, then $P$ has at most a simple pole at $p$.
\end{enumerate}
\end{cor}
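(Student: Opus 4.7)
The strategy parallels Corollary \ref{DoublePoleCor}: put the Weierstrass data near $p$ in a normal form, apply Proposition \ref{WeierstrassCompProp} to compute $\rho$, and extract the Laurent expansion of $P$. After a rotation of $\Real^3$ (which acts on $G$ by a M\"obius transformation) I may assume $G(p)=0$. By the definition of the index, $G$ then vanishes to order exactly $k$ at $p$, and there is a $p$-centered holomorphic coordinate $z$ in which
\[
G(z)=c\,z^k\bigl(1+O(z)\bigr),\quad \eta=h\,\d z,\quad h(z)=d\,z^{n+k}\bigl(1+O(z)\bigr),
\]
for some $c,d\in\mathbb{C}^\ast$. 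The exponent $n+k$ of $h$ is forced by the branching order: inspection of the Weierstrass integrand shows that $\xX'$ vanishes to order $(n+k)-k=n$ in its first two components and to order $n+k$ in its third, so the overall order of vanishing of $\xX'$ equals $n$ precisely when $h=O(z^{n+k})$.

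Next, I substitute these expansions into the formula for $\rho$ from Proposition \ref{WeierstrassCompProp} and track the $z^{-2}$ coefficient. Each of the eight terms contributes a rational expression which, to leading order, is a polynomial in $k$ and $m:=n+k$ divided by $z^2$. Summing the eight leading contributions and simplifying collapses them to $\tfrac{1}{4}\bigl((m+1)^2-4k^2\bigr)=\tfrac{1}{4}(n-k+1)(n+3k+1)$, so
\[
P=\frac{(n+k+1)^2-4k^2}{8\,z^2}\,\d z^2+\frac{B}{z}\,\d z^2+O(1)\,\d z^2
\]
for some $B\in\mathbb{C}$. The main obstacle here is precisely this algebraic collapse: checking that the eight leading contributions telescope to the clean factored quadratic above. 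It is the direct analogue of the corresponding step in Corollary \ref{DoublePoleCor}, but noticeably longer because both $G$ and $h$ now contribute singularities.

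If $n-k+1\neq 0$, the $z^{-2}$ residue is nonzero; the holomorphic change of coordinate $z\mapsto z+cz^2$ with $c$ chosen to satisfy $2cA=B$ (where $A$ denotes the double-pole residue) then eliminates the $z^{-1}$ term while preserving the $z^{-2}$ residue, exactly as at the end of the proof of Corollary \ref{DoublePoleCor}. This yields item (1). If instead $n-k+1=0$, the $z^{-2}$ coefficient vanishes identically, so $P$ has at most a simple pole at $p$ in the original coordinate, establishing item (2).
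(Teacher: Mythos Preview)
Your proposal is correct and follows essentially the same route as the paper: normalize the Weierstrass data near $p$, plug into the formula for $\rho$ from Proposition \ref{WeierstrassCompProp}, read off the Laurent expansion, and then kill the simple-pole term by the coordinate change $z\mapsto z+cz^2$. The only cosmetic difference is that the paper re-parameterizes so that $G(z)=z^k$ \emph{exactly} (not merely to leading order), which trims the bookkeeping in the algebraic collapse you flag as the main obstacle.
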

\begin{proof}
 We may pick a complex coordinate patch $(V,z)$ about $p$ so that $z(p)=0$ and on $V^*=V\backslash \set{p}$ the parameterization $\xX$ is a smooth immersion. Let $(z(V^*), J, G, \eta)$ be the Weierstrass data of this immersion where here $J$ is the usual complex structure.  As $\xX$ has an order $n\geq 1$ branch point with index $k$ at $z(p)=0$, up to an ambient rotation of $\Real^3$ and a re-parameterization the data has the form
\begin{equation*}
 G(z)= z^k
\end{equation*}
for $k\geq 1$ and
\begin{equation*}
 \eta=\left(a z^{n+k}+bz^{n+k+1}\right) \d z+O(z^{n+k+2})
\end{equation*}
where $a\neq 0$.
Computing gives
\begin{align*}
 P&=\left(\frac{(n+k+1)^2-4 k^2}{8 z^2}+\frac{\frac{b}{a}(n+k-1)}{4z}\right) \d z^2+O(1)\\
 &=\left(\frac{(n-k+1)(n+3k+1)}{8 z^2}+\frac{\frac{b}{a}(n+k-1)}{4z}\right) \d z^2+O(1).\\
\end{align*}
The corollary follows by noting that if $n-k+1=0$, then $P$ has at most a simple pole at $p$ as claimed. If $n-k+1\neq 0$, then $P$ has a double pole and may be put in the claimed form by replacing $z$ by $z+cz^2$ for an appropriate choice of $c$.
\end{proof}
\begin{rem}
 We do not distinguish between true and false branch points.  However, any false branch point of a smooth minimal surface at a point with non-vanishing curvature must have order of vanishing $n$ and index $n+1$.
\end{rem}

\subsection{Hill's equation and the (spinor) Weierstrass representation}
We conclude by relating the solutions $w_1,w_2$ from Proposition \ref{SurfaceFromAandTProp} to the Weierstrass data. We observe a connection with the spinorial Weierstrass representation of~\cite{Kusner} but do not explore this in depth.
\begin{prop}\label{WeierstrassHillEqnProp}
Fix a simply-connected domain $V\subset \mathbb{C}$.
Suppose $(V, J_{std}, G, h\,\d z)$ is the Weierstrass data of a minimal immersion with Hopf differential $Q=\d z^2$ and entropy differential $P=\frac{\rho}{2}dz^2$, then 
\begin{align*}
 w_1(z)&=\frac{\sqrt{2}}{2}  \sqrt{-G^{-1}(z) h(z)}\\
 w_2(z)&=\frac{\sqrt{2}}{2}  \sqrt{-G(z) h(z)}
\end{align*}
are single-valued and satisfy \eqref{linqeqn}.  Furthermore, $w_1$ and $w_2$ satisfy \eqref{WronskianCond} provided the branches of the square-root are chosen so $\frac{w_2}{w_1}=G$.  
\end{prop}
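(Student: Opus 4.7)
The plan is to reduce everything to the Schwarzian identity already established in Proposition \ref{WeierstrassCompProp}. The hypothesis $Q=\d z^2$ combined with the Weierstrass formula $Q=-(hG'/G)\,\d z^2$ forces $h=-G/G'$, so that $-G^{-1}h=1/G'$ and $-Gh=G^2/G'$. Substituting these into the definitions gives
\[
w_1 = \tfrac{1}{\sqrt{2}}\,(G')^{-1/2}, \qquad w_2 = \tfrac{1}{\sqrt{2}}\,G\,(G')^{-1/2} = G\,w_1.
\]
Since $Q=\d z^2$ is nowhere zero on $V$, both $G$ and $G'$ are nonvanishing there; as $V$ is simply connected, the nonvanishing holomorphic function $G'$ admits a single-valued holomorphic square root, and the choice of branches making $w_2/w_1 = G$ is exactly the compatible one asserted in the statement.

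To verify Hill's equation \eqref{linqeqn} for $w_1$, I would compute $(\log w_1)' = -\tfrac{1}{2}(G''/G')$ and then
\[
\frac{w_1''}{w_1} = (\log w_1)'' + \left((\log w_1)'\right)^2 = -\tfrac{1}{2}\left[\frac{G'''}{G'} - \tfrac{3}{2}\left(\frac{G''}{G'}\right)^2\right] = -\tfrac{1}{2}\{G,z\}.
\]
Proposition \ref{WeierstrassCompProp} states that when $Q=\d z^2$ we have $P=\{G,z\}\,\d z^2$, whence $\rho=2\{G,z\}$ and hence $w_1''+\tfrac{\rho}{4}w_1=0$. For $w_2=Gw_1$ the product rule gives
\[
(Gw_1)'' + \tfrac{\rho}{4}Gw_1 = G\left(w_1''+\tfrac{\rho}{4}w_1\right) + 2G'w_1' + G''w_1,
\]
in which the first summand vanishes by the case of $w_1$ and the remaining two cancel upon substituting $w_1'/w_1 = -G''/(2G')$.

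Finally, the Wronskian condition falls out of $w_2 = Gw_1$ with essentially no work:
\[
W(w_1,w_2) = w_1(Gw_1)' - Gw_1\cdot w_1' = G'\,w_1^2 = \tfrac{1}{2},
\]
using $w_1^2 = 1/(2G')$. The only calculation of any real substance is the Schwarzian bookkeeping in showing $w_1''/w_1 = -\tfrac{1}{2}\{G,z\}$, and even that is just careful differentiation of $(G')^{-1/2}$; everything else follows immediately from the relation $w_2 = G w_1$ and the formula $\rho = 2\{G,z\}$ supplied by Proposition \ref{WeierstrassCompProp}.
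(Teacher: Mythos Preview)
Your argument is correct in substance and follows essentially the same route as the paper: both proofs rest on Proposition~\ref{WeierstrassCompProp} to identify $\rho$ with $2\{G,z\}$, and both verify \eqref{linqeqn} by relating $w_i''/w_i$ to the Schwarzian. The paper first checks the Wronskian, differentiates it to obtain $w_1''/w_1=w_2''/w_2$, and then invokes the classical fact that the Schwarzian of the quotient of two solutions to $w''+\tfrac{\hat\rho}{4}w=0$ equals $\hat\rho/2$; you instead compute $w_1''/w_1=-\tfrac12\{G,z\}$ directly from $w_1=\tfrac{1}{\sqrt2}(G')^{-1/2}$ and handle $w_2=Gw_1$ by the product rule. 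These are two executions of the same idea.

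One imprecision worth tightening: the assertion that $G$ is nowhere vanishing on $V$ is not generally true. The immersion condition allows $G$ to have simple zeros or simple poles (with $h$ vanishing to first order there), so long as $G\eta$ and $G^{-1}\eta$ remain holomorphic. What you actually need---and what does hold---is that $G'$, regarded as a meromorphic function, has no zeros and only double poles: from $-hG'/G=1$ one sees $G'$ cannot vanish where $G$ is finite and nonzero, while at a simple zero of $G$ one has $G'\neq 0$, and at a simple pole of $G$ the derivative $G'$ has a double pole. Hence $1/G'$ is holomorphic with at most double zeros, so $(G')^{-1/2}$ is single-valued on the simply connected $V$, and $w_2=Gw_1$ remains holomorphic across poles of $G$ since the simple zero of $w_1$ there cancels the pole. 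The paper makes this analysis explicitly in terms of the functions $-G^{\pm1}h$; your simplification $w_1=\tfrac{1}{\sqrt2}(G')^{-1/2}$ is cleaner, but the single-valuedness justification should be phrased in terms of $G'$ rather than $G$.
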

\begin{proof}
As $G$ and $h\, \d z$ is the Weierstrass data of a minimal immersion,  $G h$ or $G^{-1} h$ do not have a pole on $V$.  Moreover, if either function vanished at a point $z_0$, then $h(z_0)=0$. As $Q=\d z^2$,  $-\frac{G'}{G} h=1$. Because $G$ is meromorphic,  $h$ has at most a simple zero at $z_0$ and so $G$ has either a simple pole or a simple zero at $z_0$. Hence, at $z_0$ either $G h\neq 0$  and $G^{-1} h$ has a double zero or $G^{-1} h\neq 0$  and $G h$ has a double zero.  Taken together this implies that $w_1$ and $w_2$ are single-valued.

A straightforward computation gives that $w_1, w_2$ satisfy the Wronskian condition~\eqref{WronskianCond}.  Differentiating \eqref{WronskianCond} once, gives that 
$$\frac{w_1''}{w_1}=\frac{w_2''}{w_2}=-\frac{\hat{\rho}}{4}$$ 
for a meromorphic function $\hat{\rho}$.
It is a classical fact -- see for instance~\cite{Hille} -- that if $\hat{w}_1, \hat{w}_2$ solve $w''+\frac{\hat{\rho}}{4} w=0$, then $\hat{G}=\frac{\hat{w}_1}{\hat{w}_2}$ satisfies $\{\hat{G}, z\}=\frac{\hat{\rho}}{2}$. As $\frac{w_2}{w_1}= G$, this implies that $\hat{\rho}=2 \{ G, z \}=\rho$ and so $w_1, w_2$ satisfy \eqref{linqeqn}.
\end{proof}
\begin{cor} \label{WeierstrassFromHillsEqnCor}
Let $V\subset \mathbb{C}$ be a fixed simply-connected domain.
 If $\rho$ is a holomorphic function on $V$ and  
$\wW=(w_1, w_2)^\top$ satisfies \eqref{linqeqn} and \eqref{WronskianCond}, then the minimal immersion $\xX_\wW$ of Corollary \ref{MinSurfaceFromHillsEqnCor} may be be chosen to have Weierstrass data  $\left(V, J_{std},G, \eta\right)$, where
\begin{equation*}
G= \frac{w_2}{w_1}\quad \text{and}\quad \eta=-2w_1 w_2 \d z.
\end{equation*}
\end{cor}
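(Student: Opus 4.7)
The plan is to verify directly that the pair $(G,\eta)=(w_2/w_1,\,-2w_1w_2\,\d z)$ gives bona fide Weierstrass data on $V$ and that the resulting minimal immersion has the same geometric data $(V,g_\wW,A)$ as the immersion $\xX_\wW$ produced by Corollary \ref{MinSurfaceFromHillsEqnCor}. Once this is established, Bonnet's theorem allows one to choose $\xX_\wW$ to be the immersion reconstructed from $(V,J_{std},G,\eta)$ via \eqref{WeierstrassRep}.

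First I would check that the proposed data is admissible. Since $w_1,w_2$ are holomorphic and the Wronskian condition \eqref{WronskianCond} forces $w_1$ and $w_2$ to have no common zero, $G=w_2/w_1$ is a well-defined meromorphic function on $V$ and $\eta=-2w_1w_2\,\d z$ is holomorphic. The products
\begin{equation*}
G\eta = -2w_2^2\,\d z,\qquad G^{-1}\eta = -2w_1^2\,\d z
\end{equation*}
are both holomorphic, so condition \eqref{WRCond1} is satisfied; simple-connectedness of $V$ renders the period condition \eqref{WRCond2} vacuous. Hence \eqref{WeierstrassRep} produces a (branched) minimal immersion of $V$ into $\R^3$.

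Next I would compute the induced geometric data and compare. Differentiating $G=w_2/w_1$ and using \eqref{WronskianCond} gives
\begin{equation*}
G' \;=\; \frac{w_1w_2'-w_2w_1'}{w_1^2} \;=\; \frac{W(w_1,w_2)}{w_1^2} \;=\; \frac{1}{2w_1^2}.
\end{equation*}
With $h=-2w_1w_2$ the standard Weierstrass formulas then yield
\begin{equation*}
Q \;=\; -\frac{hG'}{G}\,\d z^2 \;=\; \d z^2,\qquad g \;=\; \frac{|h|^2}{4}\bigl(|G|+|G|^{-1}\bigr)^2\d z\circ \d\bar z \;=\; \bigl(|w_1|^2+|w_2|^2\bigr)^2\d z\circ \d\bar z \;=\; |\wW|^4\,\d z\circ \d\bar z.
\end{equation*}
Thus the induced metric coincides with $g_\wW$ and the Hopf differential equals $\d z^2$, whose real part is precisely $A=\re(\d z^2)$.

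Finally I would invoke the Bonnet uniqueness half of the fundamental theorem of submanifold geometry: two minimal immersions of the same simply-connected domain with the same first and second fundamental forms differ by a rigid motion of $\R^3$. Since the immersion built from $(V,J_{std},G,\eta)$ has geometric data $(V,g_\wW,A)$, it agrees with $\xX_\wW$ up to a rigid motion, and so $\xX_\wW$ may be chosen equal to it, giving Weierstrass data exactly as claimed. There is no real obstacle here; the only delicate point is confirming that the zeros of $w_1$ and $w_2$ conspire (via the Wronskian condition) to keep $G\eta$ and $G^{-1}\eta$ holomorphic, which the computation above handles automatically.
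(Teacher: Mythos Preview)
Your proof is correct and follows essentially the same route as the paper: define the Weierstrass data from $w_1,w_2$, compute directly that the induced metric is $|\wW|^4\,\d z\circ\d\bar z$ and the Hopf differential is $\d z^2$, and then invoke Bonnet uniqueness to identify the resulting immersion with $\xX_\wW$. The only cosmetic difference is that the paper also appeals to Proposition~\ref{WeierstrassHillEqnProp} to verify the entropy differential is $\frac{\rho}{2}\,\d z^2$, whereas you (correctly) note that this is already forced once the geometric data $(V,g_\wW,A)$ agrees with that of Corollary~\ref{MinSurfaceFromHillsEqnCor}; on the other hand, you are more explicit than the paper in checking the admissibility conditions \eqref{WRCond1}--\eqref{WRCond2}.
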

\begin{rem}
 If we let $s_i=w_i \sqrt{\d z} $ be holomorphic spinors, then the $s_i$ are (up to choices of normalization) the spinor Weierstrass data of~\cite{Kusner}.
\end{rem}
\begin{proof}  
Set ${G}=\frac{{w}_2}{{w}_1}$ and  $\eta=-2{w}_1{w}_2 \d z$ and let $\xX_{{\wW}}$ be the  minimal immersion corresponding to this data.
As $w_1=\frac{\sqrt{2}}{2}  \sqrt{-G^{-1}(z) h(z)}$ and $w_2=\frac{\sqrt{2}}{2}  \sqrt{-G(z) h(z)}$, Proposition \ref{WeierstrassHillEqnProp} implies that the entropy differential of $\xX_\wW$ is $\frac{\rho}{2} dz^2$.
A direct computation and \eqref{WronskianCond} imply that the Hopf differential of $\xX_{\wW}$ is $dz^2$.
Finally,
$$
\xX_{\wW}^*g_E=\frac{1}{4} |h|^2\left( |G|+|G^{-1}|\right)^2 |dz|^2=|\wW|^4 \d z \otimes \d\bar{z} =g_{\wW}.
$$
Hence, $\xX_{\wW}$ satisfies the conclusions of Corollary \ref{MinSurfaceFromHillsEqnCor} which verifies the claim.
\end{proof}

\section{Characterization of Minimal Surfaces in Terms of the Entropy Differential}\label{CharSec}
In this section we characterize a number of classical minimal surfaces in terms of the entropy form. In particular, we show that the entropy form vanishes if and only if the surface is contained in Enneper's surface.  The catenoid and helicoid are also characterized in terms of a simple relationship between the entropy form and the second fundamental form.

\subsection{Deformed Catenoids and Helicoids}
In order to get a complete characterization we must introduce two one-parameter families of surfaces, $\mathcal{C}$ and $\mathcal{H}$, which we call, respectively, deformed catenoids and deformed helicoids.
Specifically, $\mathcal{C}$ is the family of surfaces $C_t$ with Weierstrass data 
\begin{equation*}
\left(\mathbb{C}, J, \frac{t-e^z}{1-t e^z}, \frac{1}{1-t^2} (1-t e^{-z})(1-t e^{z})\d z\right). 
\end{equation*}
Similarly, $\mathcal{H}$ is the family of surfaces $H_t$ with Weierstrass data
\begin{equation*}
\left(\mathbb{C}, J, \frac{t- e^{z}}{1- t e^{z}}, \frac{-i}{1-t^2} (1-t e^{-z})(1-t e^{z})\d z\right). 
\end{equation*}
In both cases, $z$ is the usual coordinate on $\mathbb{C}$, $J$ the usual complex structure and $t\in (-1,1)$.  In particular,  $C_0$ is the vertical catenoid and $H_0$ is the vertical helicoid.  Computing as in the preceding section we obtain that for surfaces in $\mathcal{C}$
\begin{equation*}
 P=-\frac{1}{2} \d z^2=\frac{1}{2} Q,
\end{equation*}
and for surfaces in $\mathcal{H}$
\begin{equation*}
 P=-\frac{1}{2} \d z^2=\frac{i}{2} Q.
\end{equation*}
We remark that $\mathcal{C}$ and $\mathcal{H}$ are obtained from $C_0$ and from $H_0$ by applying the one parameter family of M\"{o}bius transforms 
$$
B_t:z\mapsto \frac{t+z}{1-t z}
$$
to the Gauss maps of $C_0$ and $H_0$.

Writing $z=x+iy$ and integrating \eqref{WeierstrassRep} gives the parameterizations of $C_t \in \mathcal{C}$:
\begin{align*}
 \FF_t^{\mathcal{C}}(x,y)&=  \FF_0^{\mathcal{C}}(x,y)+\frac{2 t}{1-t^2} \left( 0,- y+t \cosh x \sin y   , t x-\sinh x \cos y\right) ;\\
\FF_0^{\mathcal{C}}(x,y)&=\left(\cosh x \cos y , \cosh x \sin y, x\right).
\end{align*}
Here $\FF_0^{\mathcal{C}}$ is a parameterization of (an infinite cover of) the catenoid.  By inspection,  $C_t$ has $\Pi_2=\set{x_2=0}$ and $\Pi_3=\set{x_3=0}$ as planes of reflectional symmetry. Moreover, 
$$\FF_t^{\mathcal{C}}(x,y+2\pi)=\FF_t^{\mathcal{C}}(x,y)-\frac{4\pi t}{1-t^2} \eE_2$$
 and so $C_t$ is singly-periodic.
When $t \neq 0$, it is straightforward to see that $C_t$ is not embedded. Suppose $E_\theta$ is the rotation of the upper half of $C_0$ by $\theta$ around the $x_2$-axis. One verifies that $C_t$ is close to the union of translates of $E_\theta$ and of $E_{\pi-\theta}$ where here $\theta=\tan^{-1}\left( \frac{2t}{1-t^2}\right)$.

Similarly, elements of $\mathcal{H}$ are parametrized by
\begin{align*}
 \FF_t^{\mathcal{H}}(x,y)&=  \FF_0^{\mathcal{H}}(x,y)+\frac{2t}{1-t^2} \left(0,  x+t\sinh x \cos y, t y- \cosh x \sin y\right);\\
\FF_0^{\mathcal{H}}(x,y)&= (\sinh x \sin y,- \sinh x \cos y, y).
\end{align*}
For $t=0$ this is a parametrization of the helicoid. Note that the image of $\set{x=0}$ is a the $x_3$-axis while the the image of $\set{y=n \pi}$ for $n$ an integer are the set of parallel lines $\set{x_1=0, x_3=\frac{1+t^2}{1-t^2} n \pi}$ contained in the $\set{x_1=0}$ plane.  Moreover, 
$$\FF_t^{\mathcal{H}}(x,y+2\pi)= \FF_t^{\mathcal{H}}(x,y)+2\pi \frac{1-t^2}{1+t^2} \eE_3$$
 so $H_t$ is singly-periodic.
 For $t\neq 0$, $H_t$ is not embedded. However, if we denote by $H^\pm_t$ the two components of $H_t \backslash \set{x_1=x_2=0}$, then each $H^\pm_t$ is embedded.  In fact, each is a multi-valued graphs over the plane $\Pi_\theta$ which contains the $x_2$-axis and makes an angle $\theta=\tan^{-1}\left( \frac{2t}{1-t^2}\right)$ with the plane $\Pi_3=\set{x_3=0}$.  In particular, rotating $H^\pm_t$ by $\theta$ around the $x_2$-axis gives a surface that looks (roughly) like a sheared copy of $H_0^\pm$.

\subsection{Characterization of minimal surfaces in terms of $P$ and $Q$}
We now characterize surfaces in terms of simple relationships between $P$ and $Q$. In light of Proposition \ref{SurfaceFromAandTProp}, we expect there to be a three-parameter family of surfaces for any fixed of $P$ and $Q$.  However, in simple settings two of these parameters correspond to re-parameterizations. 
\begin{thm}\label{MainCharacterizationThm}
 Let $\Sigma$ be a smooth oriented non-flat minimal surface in $\Real^3$ with Hopf differential $Q$ and entropy differential $P$. We have:
\begin{enumerate}
 \item \label{EnneperCase} If $P\equiv 0$, then up to a rigid motion and homothety, $\Sigma$ is contained in Enneper's surface;
 \item \label{CatCase} If $\lambda \neq 0$ and  $P\equiv \lambda Q$, then,  up to a rigid motion and homothety, $\Sigma$ is contained in a surface $C\in\mathcal{C}$.  If $\Sigma$ is properly embedded, then it is the catenoid;
\item \label{HelCase} If $\lambda \neq 0$ and $P \equiv  \I Q$, then, up to a rigid motion and homothety, $\Sigma$ is contained  in a surface $H\in \mathcal{H}$.  If $\Sigma$ is properly embedded, then it is the helicoid.
\end{enumerate}
\end{thm}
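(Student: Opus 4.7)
The plan is to reduce the theorem to a local Weierstrass computation via the Hill's-equation description in Proposition~\ref{SurfaceFromAandTProp} and Corollary~\ref{WeierstrassFromHillsEqnCor}, and then extend globally by analytic continuation. Since $\Sigma$ is non-flat, the Hopf differential $Q$ is nontrivial and holomorphic, so its zeros (the umbilics) are isolated. I would fix a non-umbilic point $p \in \Sigma$ and choose a simply-connected neighborhood $V$ of $p$ with a holomorphic coordinate $z$ satisfying $Q = \d z^2$. In this coordinate $P = \frac{\rho}{2}\d z^2$ for a holomorphic function $\rho$, and the three cases of the theorem translate to $\rho \equiv 0$, $\rho \equiv 2\lambda$, and $\rho \equiv 2\I\lambda$, respectively.

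By Corollary~\ref{WeierstrassFromHillsEqnCor}, every minimal immersion on $V$ realising these data arises from Weierstrass data $G = w_2/w_1$, $\eta = -2 w_1 w_2\,\d z$, where $\wW = (w_1, w_2)^{\top}$ solves Hill's equation $w'' + \frac{\rho}{4} w = 0$ with Wronskian $\frac{1}{2}$; the residual $\mathrm{SL}(2,\mathbb{C})/\mathrm{SU}(2)$ freedom preserves the induced metric and the second fundamental form, so by Bonnet the corresponding surfaces differ only by rigid motion. The next step is to solve Hill's equation in each case and match the resulting Weierstrass data to the claimed families. In Case~\eqref{EnneperCase} the solutions are affine in $z$, and a concrete choice such as $w_1 = -z/\sqrt{2}$, $w_2 = 1/\sqrt{2}$ gives $G = -1/z$, $\eta = z\,\d z$, which, after an $\mathrm{SU}(2)$-rotation of the Gauss map and a homothety, is the standard Enneper data. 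In Case~\eqref{CatCase} the exponential solutions $e^{\pm \mu z}$ with $\mu^2 = -\lambda/2$ yield, after normalising the Wronskian and rescaling $z$, the catenoid's Weierstrass data $(e^{\tilde z}, c\,\d\tilde z)$, and the remaining $\mathrm{SL}(2,\mathbb{C})/\mathrm{SU}(2)$-freedom acts on $G$ by M\"obius transformations that, modulo the $\mathrm{SO}(3)$ rotations of $\mathbb{R}^3$, sweep out precisely the one-parameter subfamily $B_t : e^z \mapsto (t - e^z)/(1 - t e^z)$ defining $\mathcal{C}$. Case~\eqref{HelCase} is identical with $\mu^2 = -\I\lambda/2$, producing $\mathcal{H}$. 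Consistency is automatic from the identities $P = \frac{1}{2} Q$ on $\mathcal{C}$ and $P = \frac{\I}{2} Q$ on $\mathcal{H}$ already recorded in the section (together with a direct Weierstrass computation for Enneper's surface).

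Once the local identification is in place, real-analyticity of minimal immersions allows the conclusion to be propagated to all of $\Sigma$: since umbilics are isolated and the Weierstrass data extends across them, the maximal connected analytic surface containing $\Sigma$ is Enneper's surface in Case~\eqref{EnneperCase} and some member of $\mathcal{C}$ or $\mathcal{H}$ in Cases~\eqref{CatCase} and~\eqref{HelCase}. For the embeddedness statements, the explicit parametrisations $\FF_t^{\mathcal{C}}$ and $\FF_t^{\mathcal{H}}$ given earlier in the section show that $C_t$ and $H_t$ self-intersect whenever $t \neq 0$, so a properly embedded $\Sigma$ (which is closed in $\mathbb{R}^3$) cannot be contained in such a surface, forcing $t = 0$. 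The main obstacle I expect is the moduli-counting in Cases~\eqref{CatCase} and~\eqref{HelCase}: verifying that the three-dimensional $\mathrm{SL}(2,\mathbb{C})/\mathrm{SU}(2)$-family of local solutions reduces, after quotienting by the $\mathrm{SO}(3)$-action of rigid rotations, exactly to the one-parameter families $\mathcal{C}$ and $\mathcal{H}$, and in particular identifying which M\"obius transformations of the Gauss map come from rigid motions as opposed to genuine deformations of the surface.
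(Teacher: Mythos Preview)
Your overall strategy matches the paper's: normalize so that $Q=\d z^2$ locally, solve the constant-coefficient Hill's equation, and identify the resulting Weierstrass data with the model surfaces. However, there are two points where your execution diverges from (and falls short of) the paper's.

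First, a misstatement: you write that ``the residual $\mathrm{SL}(2,\mathbb{C})/\mathrm{SU}(2)$ freedom preserves the induced metric and the second fundamental form.'' This is backwards. It is the $\mathrm{SU}(2)$-action that preserves $u_\wW=\log|\wW|^2$ and hence the metric $g_\wW=|\wW|^4\,\d z\circ\d\bar z$; the coset space $\mathrm{SL}(2,\mathbb{C})/\mathrm{SU}(2)$ parametrizes genuinely \emph{different} metrics (this is exactly the content of Proposition~\ref{SurfaceFromAandTProp}). Consequently, when you later try to cut this three-parameter family down to one parameter by ``quotienting by the $\mathrm{SO}(3)$-action of rigid rotations,'' nothing happens: the $\mathrm{SO}(3)$-rotations of $\Real^3$ act on Weierstrass data precisely through $\mathrm{SU}(2)\subset\mathrm{SL}(2,\mathbb{C})$, and you have already divided that out. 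You are left with three real parameters and no mechanism to reduce further. The missing ingredient---which the paper exploits systematically---is the \emph{translation action} $\tau\cdot\wW(z)=\wW(z+\tau)$. Because $\rho$ is constant, Hill's equation is translation-invariant, so this $\mathbb{C}$-action (two real parameters) is available and corresponds merely to reparametrizing the coordinate $z$. The paper combines the $QL$-factorization of $\mathrm{SL}(2,\mathbb{C})$ with repeated applications of this translation action (and one further $\mathrm{SU}(2)$-rotation) to reduce the three-parameter family to the single parameter $t=\tan\phi$. Without the translation action your moduli count cannot close, which is exactly the obstacle you flag at the end.

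Second, your treatment of umbilics is weaker than necessary. You allow isolated umbilics and plan to analytically continue across them. The paper instead observes, via Corollary~\ref{DoublePoleCor}, that $P$ has a genuine double pole at every umbilic; since in each case $P$ is a holomorphic multiple of $Q$ (hence regular), there can be \emph{no} umbilics at all. This immediately makes the normalization $Q=\d z^2$ available on any simply-connected patch and removes the need for a separate continuation argument.
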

\begin{rem}
If $\Sigma$ is an oriented minimal surface in $\R^3$ with Hopf differential $Q$ and entropy differential $P$, then for any $\lambda>0$ the rescaling scaling of $\lambda \Sigma$ has Hopf differential $\lambda Q$ and entropy differential $P$.  Reversing the orientation of $\Sigma$ changes $Q$ to $-Q$ but leaves $P$ unchanged.  
\end{rem}
\begin{proof}
After possibly rescaling $\Sigma$ and reversing the orientation, we may assume that $P=-\frac{1}{2} \alpha^2 \d z^2$ where $\alpha=0$ in Case \eqref{EnneperCase}, $\alpha^2=1$ in Case \eqref{CatCase} and $\alpha^2=i$ in Case \eqref{HelCase}.
As $\Sigma$ is smooth and non-flat, the second fundamental form has no singularities and $P$ can only have isolated singularities. Hence, by Corollary \ref{DoublePoleCor}, in all cases $P$ has no singularities and $Q$ has no zeros on $\Sigma$.  
Hence, for any point $p\in \Sigma$ there is a simply connected neighborhood $V$ of $p$ and complex coordinate $z:V \to \mathbb{C}$ so that the Hopf differential satisfies $Q=-\d z^2$. That is, $P=\frac{\alpha^2}{2} dz^2$.
By Corollary \ref{WeierstrassFromHillsEqnCor}, in order to recover the surface it is enough to understand the holomorphic solutions on  $z(V)$ to the Hill's equation:
\begin{equation}\label{HillsEqnConst}
 \partial^2_{zz}w-\frac{\alpha^2}{4}w =0.
\end{equation}
Clearly, this equation makes sense on all of $\mathbb{C}$ (with $z$ as the usual coordinate) and analytic continuation implies that all solutions are obtained by restricting global solutions to $z(V)$.  Let $\wW(z)=(w_1(z), w_2(z))^\top$ be a pair of solution to the Hill's equation with Wronskian $W(w_1,w_2)=\frac{1}{2}$. 

We note there are two natural actions on the space of solutions.  The first is the natural action of $\mathrm{SL}(2,\mathbb{C})$ of Proposition \ref{SurfaceFromAandTProp} which is transitive.  The second is an action of $\mathbb{C}$ that arises from the translation invariance of \eqref{HillsEqnConst}.  Specifically, let $\mathbb{C}$ act on $\wW$ by $\tau\mapsto \wW(z+\tau)$. The translation invariance of \eqref{HillsEqnConst} and of the Wronskian condition implies that this is a well defined action.  By Proposition \ref{SurfaceFromAandTProp}, the action of $\mathrm{SU}(2)\subset \mathrm{SL}(2,\mathbb{C})$ does not change the geometry of the surface. Likewise, the action of $\mathbb{C}$ amounts to a change of coordinates and also does not change the geometry.  Our goal is to determine all geometrically distinct solutions.

First, note that the Gram-Schmidt procedure implies that any matrix $B\in \mathrm{SL}(2,\mathbb{C})$ may be factored as
\begin{equation*}
 B=U L
\end{equation*}
where $U\in \mathrm{SU}(2)$ and $L\in \mathrm{SL}(2,\mathbb{C})$ is lower triangular with positive entries on the diagonal and $\det L=1$.  This is sometimes called the $QR$ (or in this case $QL$) factorization. 
We write any such $L$ as
\begin{equation*}
L=\begin{bmatrix} \mu & 0 \\ \nu & \mu^{-1} \end{bmatrix}
\end{equation*}
where $\mu>0$ and $\nu \in \mathbb{C}$.
We now treat the case $\alpha=0$ and $\alpha \neq 0$ separately.
{\bf Case \eqref{EnneperCase}:}
By inspection a pair of solutions to \eqref{HillsEqnConst} with $\alpha=0$ and satisfying the Wronskian condition are
\begin{equation*}
 w_1(z)=1 \mbox{ and } w_2(z)=\frac{1}{2} z.
\end{equation*}
Hence, by the $QL$ factorization, the functions
\begin{equation*}
 w_1(z)=\mu \mbox{ and } w_2(z)=\nu+\frac{1}{2}\mu^{-1} z,
\end{equation*}
with $\mu>0$ and $\nu\in \mathbb{C}$ give all geometrically distinct solutions to \eqref{HillsEqnConst}.
Applying the translation action with  $\tau=-2\mu \nu $
gives all geometrically distinct solutions in the form
\begin{equation*}
 w_1(z)=\mu \mbox{ and } w_2(z)= \frac{1}{2}\mu^{-1} z.
\end{equation*}
By Corollary \ref{WeierstrassFromHillsEqnCor} the Gauss map the associated minimal surfaces maybe chosen so
\begin{equation*}
G(z)=\frac{w_2}{w_1}=\frac{ z}{2\mu^2}.
\end{equation*}
Moreover, as $Q=-\d z^2$ the height differential is $\eta=z \d z$. This is precisely the Weierstrass data of a rescaling of Enneper's surface proving the claim in this case.

{\bf Case \eqref{CatCase} and \eqref{HelCase}:}
As $\alpha\neq 0$, a pair of solutions to \eqref{HillsEqnConst} that satisfy the Wronskian condition are
\begin{equation*}
 w_1(z)=\frac{1}{\sqrt{2}\alpha } e^{-\frac{\alpha}{2} z} \mbox{ and } w_2(z)=\frac{1}{\sqrt{2}\alpha} e^{\frac{\alpha}{2} z}.
\end{equation*}
Hence, by the $QL$ factorization, we may write all geometrically distinct solutions  to \eqref{HillsEqnConst} in the form
\begin{equation*}
 w_1(z)= \frac{\mu}{\sqrt{2}\alpha } e^{-\frac{\alpha}{2} z} \mbox{ and } w_2(z)= \frac{1}{\sqrt{2}\alpha }\left(\nu e^{-\frac{\alpha}{2} z}  +\mu^{-1} e^{\frac{\alpha}{2} z}\right) 
\end{equation*}
with $\mu>0$ and $\nu\in \mathbb{C}$.
The translation action
allows us to express all geometrically distinct solutions as
\begin{equation*}
 w_1(z)=i\frac{e^{-i \theta/2}}{\sqrt{2}\alpha}  e^{-\frac{\alpha}{2} z} \mbox{ and } w_2(z)=i\frac{e^{i \theta/2}}{\sqrt{2}\alpha} \left( \gamma e^{-\frac{\alpha}{2} z}  - e^{\frac{\alpha}{2} z}\right) .
\end{equation*}
where $\gamma\geq 0$ and $\theta\in [0,2\pi)$.  Indeed, either $\nu=0$ and we take $\gamma=\theta=0$ or $\nu\neq 0$ and we write $\nu=\gamma \mu e^{i\theta}$. In both cases, we act by $\tau=\frac{1}{\alpha} i(\theta-{\pi}) +\frac{2}{\alpha} \ln \mu$.
Let $\phi\in (-\pi/4,\pi/4)$ satisfy
\begin{equation*}
 \tan 2 \phi = \gamma.
\end{equation*}
The matrix
\begin{equation*}
 \begin{pmatrix} \cos \phi & -\sin \phi \\ \sin \phi & \cos \phi \end{pmatrix} \begin{pmatrix} -i e^{i\theta/2} & 0 \\ 0 & -ie^{-i\theta/2} \end{pmatrix} 
\end{equation*}
is the product of two elements of $\mathrm{SU}(2)$ and so is in $ \mathrm{SU}(2)$.  Acting by this matrix gives that all geometrically distinct solutions can be put in the form
\begin{equation*}
 w_1(z)=\frac{\cos \phi }{\sqrt{2}\alpha \cos 2 \phi}  e^{-\frac{\alpha}{2} z}-\frac{\sin \phi }{\sqrt{2}\alpha }  e^{\frac{\alpha}{2} z}
\end{equation*}and
\begin{equation*} w_2(z)=\frac{\sin \phi}{\sqrt{2}\alpha \cos 2 \phi} e^{-\frac{\alpha}{2} z}  - \frac{\cos \phi }{\sqrt{2}\alpha }e^{\frac{\alpha}{2} z},
\end{equation*}
where $\phi\in (-\pi/4,\pi/4)$. By applying the translation action with $\tau= -\frac{1}{\alpha} \ln \cos 2\phi$, all  geometrically distinct solutions can be put in the simplified form
\begin{equation*}
 w_1(z)=\frac{\cos \phi e^{-\frac{\alpha}{2} z}-\sin \phi   e^{\frac{\alpha}{2} z} }{\sqrt{2 \cos 2 \phi}\alpha}  \mbox{ and } w_2(z)=\frac{\sin \phi e^{-\frac{\alpha}{2} z}  - \cos \phi e^{\frac{\alpha}{2} z} }{\sqrt{2 \cos 2 \phi}\alpha}.
\end{equation*}
By Corollary \ref{WeierstrassFromHillsEqnCor} the Gauss map the associated minimal surfaces may be chosen so
\begin{equation*}
G(z)=\frac{w_2}{w_1}= \frac{ \tan\phi - e^{\alpha z}}{1- \tan \phi e^{\alpha z}}.
\end{equation*}
Set $t= \tan \phi$.
If $\alpha^2=1$, then we may take $\alpha=1$ and as $Q=-\d z^2$ we see that $\eta = \frac{1}{1-t^2} (1-t e^{-z})(1-t e^{z})\d z$ which together with $G(z)$ is precisely the data of a deformed catenoid.
If $\alpha^2=i$, then we write $\zeta=\alpha z$.  In this case $Q=-\frac{1}{\alpha^2} \d\zeta^2=i \d\zeta^2$ and so
$\eta=- \frac{i}{1-t^2} (1-t e^{-\zeta})(1-t e^{\zeta})\d\zeta$ which together with $G(\zeta)$ is precisely the data of a deformed helicoid.
\end{proof}

\section{Curvature Estimates for Embedded Minimal Surfaces in Terms of $T$}
An interesting problem is to make the characterizations of Theorem \ref{MainCharacterizationThm} effective.  For instance, to show that a 
minimal surface with ``small'' entropy form must be close to a rescaling of a piece of Enneper's surface.  A major challenge is to determine 
an appropriate notion of smallness for the entropy form -- something made more difficult by the need to account for the possible 
singularities of $T$.  We propose a certain family of quantities as natural ways to measure this smallness and as an application give a 
novel curvature estimate for embedded minimal surfaces.

Before introducing them we note the following consequence of Corollary \ref{DoublePoleCor}. 
\begin{lem} \label{weightedEntLem}
 Let $\Sigma$ be a smooth minimal surface with metric $g$ and entropy form $T$. For $\alpha>0$ we define,  $\hat{T}_\alpha$,  the 
\emph{$\alpha$-weighted entropy form} of $\Sigma$ by  $\hat{T}_\alpha \equiv 0$ if $\Sigma$ is flat and by
\begin{equation*}
 \hat{T}_\alpha=|K_g|^{\alpha} T
\end{equation*}
otherwise.
In either case, the function $|\hat{T}_\alpha|_g^{\frac{1}{1+\alpha}}$ is locally integrable on $\Sigma$.
\end{lem}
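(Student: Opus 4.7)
The plan is to split into the flat and non-flat cases. If $\Sigma$ is flat, then $\hat T \equiv 0$ by definition and the conclusion is immediate. Assume then that $\Sigma$ is non-flat; since $H \equiv 0$, the Hopf differential $Q$ is a non-trivial holomorphic quadratic differential whose zero set---the umbilic locus $\Sigma \setminus \hat\Sigma$---is therefore discrete. On $\hat\Sigma$ both $K_g$ and $T$ are smooth, so $\hat T = K_g T$ is smooth and $|\hat T|_g$ is smooth there; the only work is to verify continuity at each umbilic point.

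Let $p$ be an umbilic point and let $n \geq 1$ denote the order of vanishing of $Q$ at $p$. By Corollary \ref{DoublePoleCor} one may choose a local complex coordinate $z$ with $z(p) = 0$ in which
\begin{equation*}
 P = -\frac{3n^2 + 4n}{8}\, \frac{\d z^2}{z^2} + O(1).
\end{equation*}
Inspection of the proof of that corollary shows that the Hopf differential in the same Weierstrass chart takes the form $Q = -a(n+1)\, z^n\, \d z^2 + O(z^{n+1})$ with $a \neq 0$. Writing $g = \mu\,|\d z|^2$ with $\mu$ smooth and positive at $0$, the minimal-surface Gauss equation $|A|_g^2 = -2K_g$ combined with $|A|_g^2 = 2|Q|_g^2$ yields the pointwise asymptotics $|K_g| \sim C_1 |z|^{2n}$ and $|T|_g \sim C_2 |z|^{-2}$ for explicit positive constants $C_1, C_2$ depending on $n$, $a$, and $\mu(0)$. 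Multiplying then gives
\begin{equation*}
 |\hat T|_g \,=\, |K_g|\,|T|_g \,\sim\, C_1 C_2\, |z|^{2n-2} \quad\text{as } z \to 0,
\end{equation*}
which tends to $0$ if $n \geq 2$ and to the finite positive limit $C_1 C_2$ if $n = 1$. In either case $|\hat T|_g$ extends continuously across $p$, completing the argument.

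The only real point of care will be to read both asymptotic expansions off a single coordinate; since both are obtained from the same Weierstrass chart used to prove Corollary \ref{DoublePoleCor}, this is essentially bookkeeping, and I expect no serious analytic obstacle. The underlying mechanism is transparent: the double pole of $T$ is exactly compensated (or dominated) by the $2n$-fold vanishing of $K_g$ at each umbilic point.
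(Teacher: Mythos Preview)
Your proof is correct and follows the same approach as the paper: invoke Corollary~\ref{DoublePoleCor} for the double-pole asymptotics of $T$ at each isolated umbilic point and combine with the vanishing order of $K_g$ there. The paper obtains $K_g = O(r^2)$ from the sign constraint $K_g \leq 0$ (forcing $\nabla_g K_g(p)=0$ at a zero), while you read off the sharper $|K_g| \sim C_1|z|^{2n}$ directly from the Gauss equation and the order of the zero of $Q$---a slightly more explicit variant of the same step that makes the existence of the limit at $p$ transparent.
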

\begin{proof}
 If $\Sigma$ is flat then there is nothing to prove as $\hat{T}^\alpha$ is identically zero.  Otherwise, by Corollary \ref{DoublePoleCor}, 
$T$ is smooth away from the isolated poles where $K_g$ has a zero, in particular $|\hat{T}^\alpha|_g^{\frac{1}{1+\alpha}}$ is locally 
integrable away from the zero set. As $\Sigma$ is smooth and $K_g\leq 0$, if $K_g(p)=0$ at a point $p$, then $\nabla_g K_g (p)=0$.  In 
particular $K_g=O(r^2)$ where $r$ is the distance to $p$. On the other hand, by Corollary \ref{DoublePoleCor}, $P$ has a double pole at $p$ 
and so $|T|_g=C r^{-2} +o(r^{-2})$ for some constant $C\neq 0$.  Hence, 
$|\hat{T}_\alpha|_g^{\frac{1}{1+\alpha}}=O\left(r^{\frac{2\alpha-2}{1+\alpha}}\right)$.  As $\frac{2\alpha-2}{1+\alpha}>-2$ for 
$\alpha>0$, $|\hat{T}_\alpha|_g^{\frac{1}{1+\alpha}}$ is integrable in a neighborhood of $p$.  Since $p$ was an arbitrary singularity of 
$T$, this proves the lemma. 
\end{proof}

We propose that a reasonable notion of size for the entropy differential $T$ of a smooth minimal surface $\Sigma$ is given by
\begin{equation*}
 ||T||_{\Sigma, \alpha}:= 2^{\frac{1}{2(\alpha+1)}}\int_{\Sigma} |\hat{T}_\alpha|_g^{\frac{1}{1+\alpha}} \vol= 2^{\frac{1}{2(\alpha+1)}} \int_{\Sigma} 
|T|_g^{\frac{1}{1+\alpha}} 
|K_g|^{\frac{\alpha}{1+\alpha}} \vol.
\end{equation*}
If $\Sigma_0\subset \Sigma$, then we obviously have a domain monotonicity property
\begin{equation*}
  ||T||_{ {\Sigma_0, \alpha}}\leq  ||T||_{{\Sigma, \alpha}}.
\end{equation*}
By Lemma \ref{weightedEntLem}, if $\Sigma$ is a smooth minimal surface and $\Sigma_0$ is pre-compact in $\Sigma$, then
\begin{equation*}
   ||T||_{\Sigma_0, \alpha}<\infty
\end{equation*}
Finally, if $\hat{T}^\Sigma_\alpha$ is the $\alpha$-weighted entropy form of $\Sigma$, then  $\hat{T}^{\lambda \Sigma}_\alpha=\lambda^{-2\alpha} 
\hat{T}^\Sigma_\alpha$ is the $\alpha$-weighted entropy form of $\lambda \Sigma$.  To see this observe that $T$ is scale invariant (by construction) and the Gauss curvature scales like $\lambda^{-2}$.  Hence, as the norm of a (fixed) quadratic differential scales like $\lambda^{-2}$, 
\begin{equation*}
|\hat{T}_\alpha^{\lambda\Sigma}|_{\lambda g}^{\frac{1}{1+\alpha}} = \lambda^{-2} |\hat{T}^\Sigma_{\alpha}|_g^{\frac{1}{1+\alpha}} \mbox{ and 
so } ||{T}^{\lambda \Sigma}||_{\lambda \Sigma,\alpha} =||{T}^\Sigma||_{\Sigma, \alpha}
\end{equation*}
for all $\lambda>0$. That is, these quantities are scale invariant for all $\alpha>0$.
\begin{rem}
 Clearly, if $\Sigma$ has an umbilic point, then $\lim_{\alpha\to 0} ||T||_{\Sigma, \alpha}=\infty$.  Nevertheless, the normalized 
value $\tau:=\lim_{\alpha\to 0 } \alpha ||T||_{\Sigma, \alpha}$ is finite on reasonable surfaces. 
\end{rem}

\subsection{The Curvature Estimate}
We now use the scale invariance of $||T||_{\Sigma,\alpha}$ and Theorem \ref{MainCharacterizationThm} to prove an 
$\epsilon$-regularity result:
\begin{thm}\label{CurvEstThm}
 There are constants $\epsilon=\epsilon(\alpha)>0$ and $C=C(\alpha)>0$ so that: if $\Sigma$ is a properly embedded minimal surface in 
$B_{2R}$ and 
$$||T||_{\Sigma, \alpha}<\epsilon,$$
 then
\begin{equation*}
 R^2 \sup_{B_{R}\cap \Sigma} |A|^2 \leq C^2.
\end{equation*}
\end{thm}
\begin{rem}
 The embeddedness condition is essential as can by seen by considering an appropriate rescaling of Enneper's surface. However, as 
$\alpha\to \infty$, $||T||_{\Sigma, \alpha}\to \int_{\Sigma} |K_g| \vol$ the total curvature of $\Sigma$.  In this case, the above theorem 
 holds \emph{without} the assumption of embeddedness -- see White~\cite{White} or Anderson~\cite{Anderson}.
\end{rem}

We begin with a Lemma which is crucial to the blow-up argument.
\begin{lem}\label{BlowUpLem}
  Fix $C>0$, $p \in \R^3$ and suppose $\Sigma$ is a properly embedded smooth surface in $B_{2R}(p)\subset\Real^3$ satisfying 
\begin{equation*}
\sup_{{B}_{R}(p)\cap \Sigma} |A|^2 \geq 16 C^2 R^{-2}. 
\end{equation*}
  Then there is a point $q\in \Sigma$ and scale $s>0$ so that $B_{Cs}(q)\subset {B}_{2R}(p)$ and
\begin{equation*}
 \sup_{B_{Cs}(q)\cap \Sigma} |A|^2\leq 4 s^{-2}=4|A|^2(q).
\end{equation*}
\end{lem}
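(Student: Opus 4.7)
The plan is a standard iterative point-picking (blow-up) argument. From the curvature hypothesis, pick $q_0 \in B_R(p) \cap \Sigma$ with $|A|(q_0) \geq 4C/R$, and set $s_0 := |A|(q_0)^{-1} \leq R/(4C)$. Inductively, at stage $n \geq 0$ with $(q_n, s_n)$ where $s_n = |A|(q_n)^{-1}$: if $\sup_{B_{Cs_n}(q_n)\cap \Sigma} |A|^2 \leq 4 s_n^{-2}$, terminate and return $(q,s) = (q_n, s_n)$. Otherwise, pick $q_{n+1} \in B_{Cs_n}(q_n) \cap \Sigma$ with $|A|(q_{n+1}) > 2|A|(q_n)$, and set $s_{n+1} := |A|(q_{n+1})^{-1} < s_n/2$.

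Because $|A|$ at least doubles at each step, the scales $s_n$ decay by at least a factor of two, so telescoping gives $|q_n - q_0| < \sum_{i<n} C s_i < 2 C s_0 \leq R/2$. Combining with $|q_0 - p| < R$ and $Cs_n \leq R/4$, one obtains $|q_n - p| + Cs_n < 2R$ at every stage, which is the required inclusion $B_{Cs_n}(q_n) \subset B_{2R}(p)$.

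The real content is termination. If the iteration ran forever, $|A|(q_n) \geq 2^n \cdot 4C/R \to \infty$ while the Cauchy sequence $(q_n) \subset B_R(p)$ would converge to some $q_\infty \in \overline{B_R(p)}$. By proper embeddedness, $\Sigma \cap \overline{B_{R'}(p)}$ is compact for every $R' < R$; so if $q_\infty \in B_R(p)$, then $q_\infty \in \Sigma$, and smoothness of $\Sigma$ forces $|A|(q_\infty) < \infty$, contradicting $|A|(q_n) \to \infty$.

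The main obstacle is thus ruling out the boundary case $q_\infty \in \partial B_R(p)$. I would address this by a preliminary optimization of the starting point: rather than an arbitrary $q_0$ satisfying $|A|(q_0) \geq 4C/R$, take $q_0$ to (approximately) maximize the scale-invariant quantity $\phi(x) := (R - |x-p|) |A|(x)$ on $B_R(p) \cap \Sigma$. When $\phi$ attains its maximum at an interior point $q_0$ with $\phi(q_0) \geq 2C$, maximality yields $|A|(x) \leq 2|A|(q_0)$ on the half-ball $B_{(R-|q_0-p|)/2}(q_0) \supset B_{Cs_0}(q_0)$, and the lemma follows in a single step. Otherwise, $q_0$ can still be chosen so that $|q_0 - p| + 2Cs_0 < R$, which forces the Cauchy limit $q_\infty$ into the compact subset $\overline{B_{|q_0-p| + 2Cs_0}(p)} \subset B_R(p)$ and closes the termination argument of the previous paragraph.
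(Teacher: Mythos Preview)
Your iterative doubling scheme is a legitimate alternative to the paper's one–step argument, and the telescoping bound $|q_n-q_0|<2Cs_0\le R/2$ together with $B_{Cs_n}(q_n)\subset B_{2R}(p)$ is correct. The paper, by contrast, skips the iteration entirely: it maximizes the weight\-ed function $F(x)=\bigl(|x-p|-\tfrac{3}{2}R\bigr)^2|A|^2(x)$ over $\overline{B_{3R/2}(p)}\cap\Sigma$ (the surface being, as in the application, proper in $B_{2R}(p)$ so this set is compact), takes $q$ to be the maximizer, and reads off $\sup_{B_{Cs}(q)}|A|^2\le 4|A|^2(q)$ directly from maximality.

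The genuine gap in your write-up is the termination step. Your Option~A, maximizing $\phi(x)=(R-|x-p|)\,|A|(x)$ over $B_R(p)\cap\Sigma$, does \emph{not} come with the lower bound $\phi(q_0)\ge 2C$: the hypothesis only guarantees a point of large $|A|$ somewhere in $B_R(p)$, and that point may lie arbitrarily close to $\partial B_R(p)$, where $\phi$ vanishes. Worse, your Option~B is impossible precisely when Option~A fails. If $\phi(x)<2C$ for every $x\in\Sigma$, then $(R-|x-p|)\,|A|(x)<2C$ rearranges to $|x-p|+2C\,|A|(x)^{-1}>R$ for every $x$, so no starting point $q_0$ can satisfy $|q_0-p|+2Cs_0<R$.

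The cure is exactly the paper's device: run the weighted maximization on a ball \emph{strictly larger} than the one carrying the curvature hypothesis, say $B_{3R/2}(p)$. A point $x_0\in B_R(p)$ with $|A|(x_0)\ge 4C/R$ then has $\bigl(|x_0-p|-\tfrac{3}{2}R\bigr)^2\ge (R/2)^2$, whence $F(x_0)\ge 4C^2$, and the needed lower bound on the maximizer comes for free. If you prefer to keep the iteration, the same observation closes your argument without any Options~A/B: your own telescoping already places every $q_n$ in $\overline{B_{3R/2}(p)}\cap\Sigma$, which (with $\Sigma$ proper in $B_{2R}(p)$) is compact, so $|A|(q_n)\to\infty$ is impossible and the process must terminate.
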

\begin{proof}
 With $r(x)=|x-p|$ define the function
\begin{equation*}
 F(x)=\left(r(x)-\frac{3}{2}R\right)^2 |A|^2.
\end{equation*}
This is a Lipschitz function on $B_{\frac{3}{2}R}(p)\cap \Sigma$ that vanishes on $\partial B_{\frac{3}{2}R}(p)\cap \Sigma$.  As $F$ is continuous, non-negative and vanishes on $\partial B_{\frac{3}{2}R}(p)\cap \Sigma$, $F$ achieves its positive maximum at a point $q\in B_{\frac{3}{2}R}(p)\cap \Sigma$. 

The lower bound 
$$\sup_{{B}_{R}(p)\cap \Sigma} |A|^2 \geq 16 C^2 R^{-2} $$ implies that
$ F(q)\geq 4C^2.$
Set $s = |A|^{-1}(q)$ and
$ \sigma=\frac{3}{2}R- r(q) $
and note that $2 C s\leq \sigma$. Furthermore, if $x\in B_{\sigma/2}(q)$, then 
$$r(x)\leq \frac{3}{2} R-\frac{\sigma}{2}<\frac{R}{2}$$ and so $\sigma^2 \leq 4(r(x)-\frac{3}{2}R)^2$ and $B_{\sigma/2}(q)\subset B_{\frac{3}{2} R}(p)$. Combining these facts,
\begin{equation*}
 \sup_{B_{Cs}(q)\cap \Sigma} \frac{\sigma^2}{4} |A|^2\leq  \sup_{B_{\sigma/2}(q)\cap \Sigma} \frac{\sigma^2}{4} |A|^2\leq\sup_{B_{\sigma/2}(q)\cap \Sigma} F\leq F(q)=\sigma^2 |A|^2(q).
\end{equation*}
Which verifies the claim.
\end{proof}

We also note the following well-known fact:  
\begin{prop} \label{GeomArzAscoProp}
 Suppose that $R_i \nearrow \infty$ and that $\Sigma_i$ are properly embedded minimal surfaces in $B_{R_i}$ so that
\begin{enumerate}
 \item $0\in \Sigma_i$ and $|A^{\Sigma_i}|(0)=1$;
 \item $ \sup_{\Sigma_i} |A^{\Sigma_i}|\leq C<\infty;$
\end{enumerate}
then up to passing to a subsequence, the $\Sigma_i$ converge smoothly and with multiplicity one to a properly embedded minimal surface $\Sigma$ in $\Real^3$ so that $0\in \Sigma$ satisfies $|A^\Sigma|(0)=1$.
\end{prop}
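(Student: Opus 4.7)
The strategy is the standard geometric Arzel\`a--Ascoli argument for minimal surfaces. The key input is interior regularity: for smooth embedded minimal surfaces with $|A|\leq C$ on a ball, there exists $r_0=r_0(C)>0$ so that the component of $\Sigma\cap B_{r_0(C)}(p)$ through $p\in\Sigma$ is a graph over an open set of $T_p\Sigma$, with $C^2$ bounds coming directly from $|A|\leq C$ and higher $C^{k,\alpha}$ bounds following by bootstrapping the minimal surface equation via Schauder estimates; the constants depend only on $C$ and $k$.

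First I would apply this locally on a compact exhaustion $B_1\subset B_2\subset\cdots$ of $\R^3$. Fix $n$ and take $i$ so large that $B_{n+1}\subset B_{R_i}$; the hypothesis yields a uniform curvature bound $C_n$ on $B_{n+1}\cap\Sigma_i$. Covering $B_n$ by finitely many balls of radius $r_0(C_n)/2$, each $\Sigma_i\cap B_n$ decomposes as a uniformly controlled union of graphs with equi-bounded $C^{k,\alpha}$ norms. Arzel\`a--Ascoli and standard gluing then produce a smooth minimal surface limit in $B_n$, and a diagonal extraction yields a single subsequence $\Sigma_{i_j}$ converging smoothly to a smooth minimal surface $\Sigma\subset\R^3$ on each $B_n$.

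The remaining conclusions fall out of the convergence: $0\in\Sigma_{i_j}$ for each $j$ together with smooth convergence gives $0\in\Sigma$ and $|A^\Sigma|(0)=1$; closedness of $\Sigma$ in each $B_n$ gives properness; embeddedness of the approximants is preserved under the smooth limit. For multiplicity one at the origin, the uniform local graphical representation shows that near $0$ each $\Sigma_{i_j}$ is the graph of a single function over $T_0\Sigma_{i_j}$ through $0$, and this unique sheet converges smoothly to the unique sheet of $\Sigma$ through $0$. One propagates this to any $p\in\Sigma$ by choosing $p_j\in\Sigma_{i_j}$ with $p_j\to p$ from the graphical decomposition produced above, and invoking embeddedness of $\Sigma_{i_j}$ to rule out two distinct sheets collapsing onto a single sheet of $\Sigma$.

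The main obstacle is this multiplicity-one claim, since in general sequences of embedded minimal surfaces with merely locally bounded curvature can exhibit higher-multiplicity limits (as arises in lamination theory when several sheets accumulate on a common leaf). Here the anchoring data $0\in\Sigma_{i_j}$ together with $|A^{\Sigma_{i_j}}|(0)=1$ pins down a distinguished single sheet through the origin; the uniform local graphical description together with embeddedness of the $\Sigma_{i_j}$ then propagates multiplicity one along the whole limit surface.
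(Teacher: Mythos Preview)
Your compactness argument up to a subsequential limit is fine, but the two concluding steps have genuine gaps that the paper fills with external results you do not invoke.

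First, the multiplicity-one claim. You assert that embeddedness of the $\Sigma_{i_j}$ ``rules out two distinct sheets collapsing onto a single sheet of $\Sigma$.'' This is false as stated: the rescalings $\frac{1}{n}C$ of the catenoid are embedded and converge with multiplicity two to a plane away from the origin. Embeddedness is indeed the relevant ingredient, but it enters differently: if two disjoint sheets of $\Sigma_{i_j}$ converge to the same leaf $L$, the normalized signed distance between them converges to a positive Jacobi field on $L$, so $L$ is stable; since $L$ is complete in $\R^3$, the Fischer-Colbrie--Schoen theorem forces $L$ to be flat, contradicting $|A^L|(0)=1$. This is exactly the paper's argument, and your propagation-from-the-origin idea does not substitute for it --- the anchoring at $0$ only tells you which sheet of $\Sigma_{i_j}$ contains $0$, not that no other sheets accumulate on the same limit.

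Second, you pass silently from ``smooth limit'' to ``a properly embedded minimal surface $\Sigma$.'' A priori the limit is only a smooth minimal lamination $\mathcal{L}$ of $\R^3$; the leaf $L$ through $0$ is non-flat, but you must still rule out other leaves. The paper invokes the strong half-space theorem of Hoffman--Meeks to conclude that $L$ is the only leaf of $\mathcal{L}$. Nothing in your local graphical description excludes additional leaves disjoint from $L$.
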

\begin{proof}
 Up to passing to a subsequence, the $\Sigma_i$ converge to a smooth  minimal lamination $\mathcal{L}$ of $\Real^3$. As $0\in \Sigma_i$ for each $i$,  there is a leaf $L$ of the lamination containing $0$, moreover $|A^L|(0)=1$ and so $L$ is not flat.  Furthermore, $\sup_{\Sigma} |A^L| \leq C <\infty$ and so the injectivity radius of $L$ is positive. Hence, by \cite{MRDuke}, $L$ is properly embedded.  Finally, if the convergence is with multiplicity greater than one, then $L$ would be stable and hence flat by~\cite{Fischer-Colbrie1980}.
\end{proof}

\begin{proof}[Proof of Theorem \ref{CurvEstThm}]
By rescaling we may take $R=1$.  Assume the theorem is false, then there is a sequence of minimal surfaces $\Sigma_i$ properly embedded 
in $B_2$ so  $||{T}^{\Sigma_i}||_{\Sigma_i, \alpha}\to 0$ and $\sup_{B_{1}\cap \Sigma_i} |A^\Sigma_i|^2 \to \infty$.
By Lemma \ref{BlowUpLem}, there exist a sequence of $C_i\to \infty$, points $q_i \in \Sigma_i$ and scales $s_i\to 0$ so
$B_{C_i s_i}(q_i)\subset B_{2}$ and
 \begin{equation*}
 \sup_{B_{C_is_i}(q_i)\cap \Sigma_i} |A^{\Sigma_i}|^2\leq 4 s^{-2}_i=4|A^{\Sigma_i}|^2(q_i).
\end{equation*}
We set $\hat{\Sigma}_i=s^{-1}_i(\Sigma_i\cap B_{C s_i}(q_i)- q_i )$. The scaling properties of  $||{T}||_{\Sigma, \alpha}$ and  domain 
monotonicity together imply that 
$$||{T}^{\hat{\Sigma}_i}||_{\Sigma_i, \alpha}\leq ||{T}^{\Sigma_i}||_{\Sigma_i, \alpha}\to 0.
$$
Moreover, each $ \hat{\Sigma}_i$ is properly embedded in $B_{C_i}$, contains $0$ and satisfies
 \begin{equation*}
 \sup_{B_{C_i}(0)\cap \hat{\Sigma}_i} |A^{\hat{\Sigma}_i}|^2\leq 4 =4|A^{\hat{\Sigma}_i}|^2(0).
\end{equation*}
Hence, by Proposition \ref{GeomArzAscoProp}, up to passing to a subsequence, the $\hat{\Sigma}_i$ converge to a smooth properly embedded minimal surface $\hat{\Sigma}$ in $\Real^3$. The convergence is with multiplicity one and $0\in \hat{\Sigma}$ satisfies 
$|A^{\hat{\Sigma}}|^2(0)=1$.  By the smoothness of $\hat{\Sigma}$ and the monotonicity formula, there is a $\rho>0$ so that in $B_{\rho}(0)\cap \hat{\Sigma}$ one has $|A^{\hat{\Sigma}}|> \frac{1}{2}$ and so $\pi \rho^2<Area(\hat{\Sigma}\cap B_{\rho})\leq 2\pi \rho^2$. As the 
$\hat{\Sigma}_i$ converge smoothly and with multiplicity one to $\hat{\Sigma}$, there is an $i_0$ large so that  $i>i_0$ implies   
$|A^{\hat{\Sigma_i}}|> \frac{1}{4}$ and $\frac{\pi}{2} \rho^2 <Area(\hat{\Sigma}\cap B_{\rho})\leq 3\pi \rho^2$. As $A^{\Sigma_i}\neq 0$  , 
$\hat{T}^{\hat{\Sigma}_i}_\alpha$ is smooth in $B_{\rho}\cap \hat{\Sigma}_i $ for $i>i_0$ and converges smoothly to 
$\hat{T}^{\hat{\Sigma}}_\alpha$ in $B_{\rho}$.
However,
 $||{T}^{\hat{\Sigma}_i}||_{B_\rho\cap \hat{\Sigma}_i, \alpha} \to 0$, hence $\hat{T}^{\hat{\Sigma}}_\alpha\equiv 0$ on $B_{\rho}\cap 
\hat{\Sigma}$.  Together with $A^{\hat{\Sigma}}\neq 0$ on $B_\rho\cap \hat{\Sigma}$ this implies $T^{\hat{\Sigma}}_\alpha\equiv 0$ on 
$B_{\rho}\cap \hat{\Sigma}$ and so $B_{\rho}\cap \hat{\Sigma}$ is contained in a rescaled Enneper's surface by Theorem 
\ref{MainCharacterizationThm}. It then follows from the strong unique continuation property of smooth minimal surfaces  that $\hat{\Sigma}$ 
is a rescaled Enneper's surface in $\R^3$, contradicting that $\hat{\Sigma}$ is properly embedded and proving the theorem.
\end{proof}

\subsection{Gap properties of the entropy form}\label{GapSec}
In light of Theorem \ref{CurvEstThm} an interesting question is to determine the optimal constant $\epsilon$ in Theorem \ref{CurvEstThm}.  
This is equivalent to determining a lower bound for $||T||_{\Sigma,\alpha}$ when $\Sigma$ is a non-flat properly embedded minimal surface 
in $\Real^3$.  We present some partial results in this direction as well as pose a question about the expected behavior.

A consequence of Theorem \ref{CurvEstThm} and~\cite{MeeksIIIa} is that if $||T||_{\Sigma,\alpha}$ is finite on a properly embedded surface, 
then the surface has finite total 
curvature.  
\begin{prop}\label{FTCProp}
 If $\Sigma$ is a properly embedded minimal surface in $\Real^3$ and 
$$ ||{T}||_{\Sigma,\alpha}<\infty,$$
then 
$$\int_{\Sigma} |A|^2 \vol=2\int_\Sigma |K_g| \vol<\infty.$$ 
\end{prop}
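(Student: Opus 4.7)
The plan is to combine the curvature estimate of Theorem \ref{CurvEstThm} with the scale invariance of the weighted norm to obtain a pointwise decay rate $|A|(x) = O(1/|x|)$ at infinity, and then invoke the result of Meeks \cite{MeeksIIIa} to upgrade this decay to finite total curvature.

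First I would set up a tail argument. Because $||T||_{\hat{L}^{1/2}(\Sigma)}<\infty$ and $\hat{T}$ is continuous by Lemma \ref{weightedEntLem}, absolute continuity of the integral guarantees an $R_0>0$ large enough that the tail satisfies
\[
||T||_{\hat{L}^{1/2}(\Sigma\setminus \overline{B}_{R_0})} < \epsilon,
\]
where $\epsilon>0$ is the constant furnished by Theorem \ref{CurvEstThm}. For any point $p\in\Sigma$ with $|p|\geq 4R_0$, the ball $B_{|p|/2}(p)$ lies in the complement of $\overline{B}_{R_0}$, and domain monotonicity together with the scale invariance of $||\cdot||_{\hat{L}^{1/2}}$ gives
\[
||T^{\Sigma\cap B_{|p|/2}(p)}||_{\hat{L}^{1/2}}<\epsilon.
\]
Applying Theorem \ref{CurvEstThm} on the ball $B_{|p|/2}(p)$ (with $2R=|p|/2$) yields
\[
|A|^2(p)\leq \frac{16\,C^2}{|p|^2}.
\]

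Second, having $|A|(x)=O(1/|x|)$ at infinity for a properly embedded minimal surface in $\R^3$ is exactly the hypothesis needed to invoke \cite{MeeksIIIa}: such curvature decay forces the surface to have finite topology with each end asymptotic to a plane or catenoid, so in particular $\int_\Sigma |A|^2\,\vol<\infty$. Meanwhile on the compact region $\Sigma\cap \overline{B}_{4R_0}$, the continuity of $|\hat{T}|_g$ and smoothness of $\Sigma$ (or alternatively a direct application of Theorem \ref{CurvEstThm} at finitely many interior scales) bound $|A|$ uniformly, so the contribution of the inner region to $\int_\Sigma |A|^2\,\vol$ is also finite; adding the two pieces gives the claim.

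The main obstacle I anticipate is the passage from the pointwise curvature decay to genuine finite total curvature. The decay $|A|\leq C/|x|$ alone only gives $\int_{\Sigma\cap B_R} |A|^2\,\vol = O(\log R)$ via the monotonicity formula for area, which is not enough. The real content is packaged in \cite{MeeksIIIa}: for \emph{embedded} minimal surfaces this $1/|x|$ curvature decay rules out the wild behavior of nonembedded ends and pins down the asymptotic geometry, after which standard results (annular ends of finite conformal type, Weierstrass data with meromorphic extensions across punctures) deliver $\int_\Sigma |A|^2\,\vol<\infty$. Embeddedness is essential here, exactly as in Theorem \ref{CurvEstThm}, so this step of the argument is where all the global topology enters.
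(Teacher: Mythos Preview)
Your argument is correct and follows essentially the same route as the paper: use finiteness of the weighted norm to make the tail small outside a large ball, apply Theorem~\ref{CurvEstThm} on balls $B_{|p|/2}(p)$ to obtain the quadratic decay $|A|^2(p)\le 16C^2/|p|^2$, and then invoke the Meeks--P\'erez--Ros result~\cite{MeeksIIIa} to conclude finite total curvature. Your extra paragraph on the inner region is unnecessary (the cited theorem already yields $\int_\Sigma |A|^2\,\vol<\infty$ globally once the decay is established), but it does no harm.
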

\begin{proof}
 Since $||{T}||_{\Sigma, \alpha}<\infty$ there is a value $R>0$ so that
$
||T||_{\Sigma\backslash \bar{B}_R,\alpha} <\epsilon
$ where $\epsilon$ is given by Theorem \ref{CurvEstThm}. This implies that there is a constant $C>0$ given by Theorem \ref{CurvEstThm} so 
that 
for $p\in \Sigma \backslash \overline{B}_{2R}$ we have $B_{\frac{1}{2}|p|}{p}\subset \Real^3\backslash B_{R}$ and so
\begin{equation*}
 |A|^2(p)\leq \frac{16 C^2}{|p|^2}.
\end{equation*}
That is, $\Sigma$ has quadratic extrinsic decay of curvature.  Hence, by Theorem 1.3 of~\cite{MeeksIIIa}, $\Sigma$ has finite total 
curvature. 
\end{proof}

In order to get a more refined result, we first compute:
\begin{lem}\label{CatLem}
 If $C$ is the catenoid, then 
\begin{equation*}
 ||{T}||_{C, \alpha}=2\pi^{3/2} 
\frac{\Gamma\left(\frac{\alpha}{1+\alpha}\right)}{\Gamma\left(\frac{1}{2}+\frac{\alpha}{1+\alpha}\right)}. 
\end{equation*}
Here $\Gamma(x)$ is the Gamma function.  Hence,
\begin{equation*}
 \lim_{\alpha\to 0} \alpha ||{T}||_{C, \alpha}=2 \pi \quad \mbox{ and }\quad 
\lim_{\alpha \to \infty} ||{T}||_{C, \alpha}=4\pi.
\end{equation*}
\end{lem}
\begin{proof}
The Weierstrass data for the catenoid is $(\mathbb{C}/\langle 2\pi i\rangle, J, - e^z, \d z)$ where $J$ is the usual complex structure. Using this data and writing $z=x+iy$, we have 
$$
P=-\frac{1}{2} \d z^2, \quad g=\cosh^2 x \,|\d z|^2, \quad K_g=-
\frac{1}{\cosh^4 x}.
$$  
As a consequence, $|T|_g=\frac{\sqrt{2}}{2}|P|_g=\frac{\sqrt{2}}{2\cosh^2 x}$ and so 
$|\hat{T}_\alpha|_g=\frac{\sqrt{2}}{2\cosh^{2+4\alpha} x}$, 
hence
\begin{equation*}
 ||{T}||_{C, \alpha}= 2\pi \int_{-\infty}^\infty \frac{1}{\cosh^{\frac{2\alpha}{\alpha+1}} x} 
dx
\end{equation*}
and the integral was evaluated using Mathematica.
\end{proof}
More generally, we have:
\begin{prop}\label{EndUmbilicProp}
Let $\Sigma$ be a non-flat properly immersed minimal surface in $\R^3$.
 If $E\subset \Sigma$ is an embedded end of finite total curvature with branching order $n\geq 0$, then
$$
  \lim_{\alpha\to 0} \alpha ||{T}||_{E, \alpha}\geq \frac{  (3n+2)(n+2)}{4(n+1)}\pi.
$$
If $U\subset \Sigma$ is open and $U$ contains an umbilic point of order $n\geq 
1$, then
$$
 \lim_{\alpha\to 0} \alpha  ||{T}||_{U, \alpha}\geq \frac{(3n+4)n }{4(n+1)}\pi .
$$
If $\bar{E}\subset \Sigma$ and $\bar{E}$ contains no umbilic points, then we may replace the inequality by an equality. 
Likewise, if 
$\bar{U}\subset \Sigma$ and $\bar{U}$ contains only the one umbilic point, then we may replace the 
inequality by an equality.
\end{prop}
\begin{proof}
We begin with a general computation.
Let $\mathbb{D}^*=\mathbb{D}\backslash \set{0}$ be the punctured disk with the usual complex coordinate $z=re^{i \theta}$. Suppose that 
$\Sigma$ is a minimal surface conformally parametrized by $\mathbb{D}^*$ and the following asymptotics hold for the entropy 
differential, 
metric and Gauss curvature as $r\to 0$
$$
P= \frac{\beta}{2 }\frac{\d z^2}{z^2} +O\left(\frac{1}{r^3}\right) \d z^2, \qquad g=\mu r^k \d z \otimes \d \bar{z} +O\left(r^{k+1}\right) 
\d z \otimes \d \bar{z},
$$
and
$$
K_g=-\gamma r^l +O\left(r^{l+1}\right),
$$
where $\beta \in \mathbb{R}^*$ and $\mu,\gamma, k+l+2>0$.
As 
$$
|T_g| = \frac{1}{\sqrt{2}} |P|_g=\frac{|\beta|}{ \sqrt{2} \mu} r^{-(k+2)} +O\left( r^{-(k+1)}\right),
$$ we compute that
\begin{align*}
 |\hat{T}_g|^{\frac{1}{1+\alpha}}  \sqrt{|g|} &=
  \left( \frac{|\beta|}{ \sqrt{2} \mu} r^{-(k+2)} +O\left( 
r^{-(k+1)}\right)\right)^{\frac{1}{1+\alpha}}\cdot \left( \gamma r^l +O\left(r^{l+1}\right)\right)^{\frac{\alpha}{1+\alpha}} \\
 &\phantom{=}\; \cdot \left( 
\mu r^k+O\left(r^{k+1}\right)\right) \\
&= 2^{-\frac{1}{2(1+\alpha)}} \mu^{1-\frac{1}{1+\alpha}}  |\beta|^{\frac{1}{1+\alpha}} \gamma^{\frac{\alpha}{1+\alpha}}
r^{\frac{-1+(k+l+1)\alpha}{1+\alpha}-1} +O\left(r^{\frac{-1+(k+l+1)\alpha}{1+\alpha}}\right).
\end{align*}
Picking $R_0>0$ so the asymptotic bounds hold for $|z|\leq R_0$,  gives
\begin{align*}
 ||{T}||_{\Sigma, \alpha} &\geq 2^{\frac{1}{2(1+\alpha)}} \int_0^{2\pi} \int_0^{R_0}|\hat{T}_g|^{\frac{1}{1+\alpha}}  \sqrt{|g|}  r 
\d r \d \theta \\
&= 2\pi \mu^{1-\frac{1}{1+\alpha}}  |\beta|^{\frac{1}{1+\alpha}} \gamma^{\frac{\alpha}{1+\alpha}}\int_0^{R_0} 
r^{\frac{-1+(k+l+1)\alpha}{1+\alpha}} dr +\int_0^{R_0} O\left(r^{\frac{-1+(k+l+1)\alpha}{1+\alpha}+1}\right) dr.
\end{align*}
Evaluating the integrals, we conclude that
\begin{equation}\label{Compu}
\lim_{\alpha \to 0} \alpha ||{T}||_{\Sigma, \alpha}\geq \frac{ 2\pi |\beta| }{k+l+2}.
\end{equation}
We can replace inequality by equality provided the bounds hold on all of $\mathbb{D}^*$.

As $E$ is a non-flat embedded end of finite total curvature and branching order $n$, it is a catenoidal end if $n=0$ and a planar end 
if $n\geq 1$.  In either case, up to rotation and homothety, $E$ has Weierstrass data of the form  
$$(\mathbb{D}^*, J,  z^{n+1}, z^{n-1}\left(1 +z H_0(z)\right)\d z)$$
where $(\mathbb{D}^* 
, J)$ is the punctured disk and $H_0$ is a holomorphic function on $\mathbb{D}$. 
Writing $z=re^{i \theta}$, we compute that as $r\to 0$ that
$$
P=\left(-\frac{(3n+2)(n+2)}{ 8z^2} +O\left(\frac{1}{r^3}\right)\right) \d z^2=-\frac{(3n+2)(n+2)}{ 8}\frac{\d z^2}{z^2}
+O\left(\frac{1}{r^3}\right) \d z^2,
$$
$$
g=\left(\frac{1}{4r^4} +O\left(\frac{1}{r^3}\right) \right) \d z \otimes \d\bar{z}=\frac{1}{4 r^4} \left( \d r^2+r^2 \d 
\theta^2\right)+O\left(\frac{1}{r^3}\right) \d z \otimes \d\bar{z},
$$
and
$$
K_g=-16 (n+1)^2 r^{2n+4} +O(r^{2n+5}).
$$
Hence, the result follows from \eqref{Compu} with $\beta=-\frac{(3n+2)(n+2)}{4}$, $l=2n+4$ and $k=-4$.

At an umbilic point the computations of Corollary \ref{DoublePoleCor} imply that we can parameterize a neighborhood of the umbilic 
point by $\mathbb{D}$ so that as $r\to 0$
$$
P=-\left( \frac{3n^2+4n}{8} \right) \frac{\d z^2}{z^2} +O(1)\d z^2, \qquad g=\d z \otimes \d\bar{z}+O\left(r \right) \d z \otimes 
\d\bar{z}
$$
and
$$
K_g=-|a|^2 (n+1)^2 r^{2n}+O\left( r^{2n+1}\right).
$$
Hence, the result follows from \eqref{Compu} with $\beta=-\frac{(3n+4)n}{4}$, $l=2n$ and $k=0$.
\end{proof}
From Proposition~\ref{EndUmbilicProp} we obtain two corollaries. 
\begin{cor}\label{CountingCor}
 Let $\Sigma$ be a non-flat properly immersed minimal surface in $\Real^3$ of finite total Gauss curvature with genus $g$ and $e$ 
embedded ends, 
 then
 \begin{align*}
 \lim_{\alpha\to 0} \alpha ||{T}||_{\Sigma, \alpha} &=\frac{\pi}{4}\left( 8+12 g+10(e-2)+ \sum_{p\in \mathcal{E}\bigcup \mathcal{U}}  
\frac{n(p)}{n(p)+1} \right).
 \end{align*}
 Here $\mathcal{E}$ is the set of ends and $n(p)\geq 0$ for $p\in \mathcal{E}$ is the order of branching of the 
end, i.e., the order of branching of the extension of the Gauss map to $p$, while $\mathcal{U}$ is the 
set of umbilic points and $n(p)\geq 1$ is the order of the umbilic point for $p\in \mathcal{U}$.
\end{cor}
\begin{proof}
 As $\Sigma$ has finite total curvature,  a classic result of Osserman \cite{Osserman} implies that $\Sigma$ is conformal to a compact Riemann surface, $M$, with a finite number of punctures and that the Gauss map extends meromorphically to $M$.  Let $e_1, \ldots, e_n$ denote the punctures which correspond to the ends of $\Sigma$, and let $u_1, \ldots, u_m$ denote the umbilic points.  Pick $U_1, \ldots, U_{n+m}$ disjoint open subsets of $M$  each containing either an $e_i$ or a $u_j$. We may naturally think of the $U_i$ as open subsets of $\Sigma$.
 Notice that $\Sigma_0=\Sigma\backslash \cup_{i=1}^{n+m} U_i$ is compact and contains no umbilic points and so there is a $C>0$ so that for 
all $\alpha$,  $||T||_{\Sigma_0,\alpha}\leq C.$
 Hence,
$$\lim_{\alpha\to 0} \alpha ||T||_{\Sigma,\alpha}=\sum_{i=1}^{n+m}\lim_{\alpha\to 0} \alpha||T||_{U_i,\alpha}.$$
As each $U_i$ is either an  embedded end containing no umbilic points or contains exactly one umbilic point, Proposition~\ref{EndUmbilicProp} gives that
\begin{equation}\label{SumIdent}
\lim_{\alpha\to 0} \alpha ||{T}||_{\Sigma, \alpha} =\frac{\pi}{4} \left( \sum_{p\in \mathcal{E}}  
\frac{(3n(p)+2)(n(p)+2)}{n(p)+1}+\sum_{p\in \mathcal{U}} \frac{(3n(p)+4) n(p)}{n(p)+1} \right).
\end{equation}
The Poincar\'{e}-Hopf index theorem applied to the Hopf differential $Q$ implies that
$$
4g-4=\sum_{p\in \mathcal{E}} (n(p)-2)+\sum_{p\in \mathcal{U}} n(p).
$$
The proof is concluded by applying this identity to \eqref{SumIdent}.
\end{proof}

%

\begin{cor}
 If $\Sigma$ is a non-flat properly embedded minimal surface in $\Real^3$, then 
$$ \lim_{\alpha\to 0} \alpha ||{T}||_{\Sigma,\alpha}\geq 2\pi,$$
with equality if and only if $\Sigma$ is a catenoid.
\end{cor}
\begin{proof}
If  $\lim_{\alpha\to 0} \alpha ||{T}||_{\Sigma,\alpha}=\infty$, then there is nothing to show.  If this limit is finite, then Proposition 
\ref{FTCProp} implies that $\Sigma$ has finite total curvature.  
By the strong half-space theorem~\cite{Hoffman1990a} and the classification of embedded ends, as $\Sigma$ is not plane it must have at 
least two (catenoidal) ends.  Hence, by Corollary \ref{CountingCor},
$$
\lim_{\alpha \to 0} \alpha ||T||_{\Sigma, \alpha}\geq 2\pi
$$
with equality if and only if $\Sigma$ has genus zero, no other ends and no umbilic points.  Hence, the Gauss map extends to an
unbranched cover of the sphere, and so $\Sigma$ is the catenoid.
\end{proof}
We pose the following question:
\begin{ques}
 Let $\Sigma$ be a non-flat properly embedded minimal surface in $\Real^3$ and let $C$ be the catenoid.  Is it true that for finite $\alpha$
\begin{equation*}
 ||T^{\Sigma}||_{\Sigma, \alpha}\geq ||T^{C}||_{C, \alpha}
\end{equation*}
with equality only if $\Sigma$ is a catenoid? This is true in the limit as $\alpha\to 0$ and $\alpha\to \infty$.
\end{ques}
\subsection{Compactness properties for uniform bounds on $T$}
We conclude with a compactness result for sequences of properly embedded minimal surfaces $\Sigma_i$ which admit a uniform bound on the entropy differential.  This is a standard consequence of Theorem \ref{CurvEstThm} and the removable singularities result of~\cite{MeeksIIIa}.
\begin{thm} \label{CpctThm}
Fix $\alpha>0$ and suppose that
 $\Sigma_i$ is a sequence of properly embedded minimal surfaces in an open region $\Omega\subset \Real^3$ with entropy forms 
$T^{\Sigma_i}$ satisfying
$$||T^{\Sigma_i}||_{\Sigma_i, \alpha}\leq \bar{C}<\infty.$$
 Then there is a subsequence of the $\Sigma_i$ and  a finite (possibly empty) set of points $p_1, \ldots, p_N\in \Omega$ so that:
\setcounter{thm}{1}
\begin{enumerate}
  \item \label{CpctItemOne} On each compact set $K\subset \subset \Omega\backslash \set{p_1, \ldots, p_N},$
     $$\sup_{K\cap \Sigma_i} |A|\leq C(K)<\infty;$$
\item\label{CpctItemTwo}  $\epsilon N<2\bar{C}$ where $\epsilon=\epsilon(\alpha)>0$ is given by Theorem \ref{IntroCurvEstThm}; 
\item \label{CpctItemThree} The $\Sigma_i$ converge in $\Omega\backslash \set{p_1, \ldots, p_N}$ to a smooth minimal lamination $\mathcal{L}$ of $\Omega\backslash \set{p_1, \ldots, p_N}$.  Moreover, the closure of $\overline{\mathcal{L}}$ of $\mathcal{L}$ in $\Omega$ is a smooth lamination of $\Omega$.
\end{enumerate}
\end{thm}
\begin{proof}
 We define a sequence of Radon measures, $\mu_{i,\alpha}$, on $\Omega$ by setting
\begin{equation*}
 \mu_{i,\alpha}(U)=2^{\frac{1}{2(\alpha+1)}}\int_{\Sigma_i\cap U} |\hat{T}^{\Sigma_i}_\alpha|^{\frac{1}{\alpha+1}} \vol
\end{equation*}
so
\begin{equation*}
 \mu_{i,\alpha}(\Omega)=||T^{\Sigma_i}_\alpha||_{\Sigma_i,\alpha}\leq \bar{C}<\infty.
\end{equation*}
By the standard compactness theorem for Radon measures, up to passing to a subsequence, the $\mu_{i, \alpha}$ weak* converge to a Radon 
measure $\mu$. It follows with Theorem \ref{CurvEstThm} that if for $p\in \Omega$ there is an $r>0$ so that $B_{2r}(p)\subset \Omega$ and 
$\mu(B_{2r}(p))< \frac{1}{2} \epsilon$, then there is a constant $C>0$ so that
\begin{equation*}
 \sup_{B_{r}(p)\cap \Sigma_i} |A|^2\leq \frac{C^2}{r^2}<\infty.
\end{equation*}
By standard covering arguments and the pigeonhole principle one concludes that there are at most $N$ points $p_1, \ldots, p_N\in \Omega$ 
with $N\epsilon<2 \bar{C}$ so that no such $r$ exists.  It follows that for any compact set $K\subset \Omega \backslash \set{p_1, 
\ldots, p_N}$ we have the curvature estimate:
 $$\sup_{K\cap \Sigma_i} |A|\leq C(K)<\infty.$$
This verifies Items \eqref{CpctItemOne} and \eqref{CpctItemTwo}.

To prove Item \eqref{CpctItemThree}, we note that the uniform curvature estimates of Item \eqref{CpctItemTwo} and standard compactness results -- see Appendix B of \cite{Colding2004} -- imply that, up to passing to a further subsequence, the $\Sigma_i$ converge in $\Omega \backslash \set{p_1, \ldots, p_N}$ to a smooth minimal lamination, $\mathcal{L}$, of $\Omega\backslash \set{p_1, \ldots, p_N}$.
We claim that near each $p_i$ the lamination has quadratic curvature decay.
To prove this we apply the Lebesgue decomposition theorem to $\mu$ and to $L_\Omega$, Lebesgue measure on $\Real^3$ restricted to $\Omega$. This implies
\begin{equation*}
 \mu=\mu_{reg}+\mu_{sing}.
\end{equation*}
where $\mu_{reg}$ is absolutely continuous with respect to ${L}_\Omega$ while  $\mu_{sing}\perp {L}_\Omega $. In fact, the support of $\mu_{sing}$ is $\set{p_1, \ldots, p_n}$, because $\mathcal{L}$ is a lamination of $\Omega\backslash \set{p_1, \ldots, p_N}$.  Hence, 
$$
\lim_{\rho\to 0} \mu(B_{\rho}(p_i)\backslash \set{p_i}) = \lim_{\rho\to 0} \mu_{reg}(B_{\rho}(p_i)\backslash \set{p_i}) = 0
$$
and so there is a $\delta>0$ so that $\mu(B_{2\delta}(p_i)\backslash \set{p_i})<\epsilon$.
Hence, for $p\in B_{\delta}(p_i)\backslash \set{p_i}$ we may apply Theorem \ref{CurvEstThm} to the points $q_j\in \Sigma_j$ with $q_j\to p$ and use the smooth convergence to conclude that
\begin{equation*}
 |A|^2(p)\leq \frac{4 C^2}{|p-p_i|^2}.
\end{equation*}
Theorem 1.2 of~\cite{MeeksIIIa} then implies that each $p_i$ is a removable singularity of $\mathcal{L}$ which concludes the proof of Item \eqref{CpctItemThree}.
\end{proof}

\appendix
\section{Ricci Solitons}\label{RicciFlowSec}
We remark on an interesting connection the entropy form makes between minimal surfaces and two-dimensional Ricci solitons.  Recall, a smooth one-para\-meter family of metrics $g_t$ on a fixed manifold $M$ is a \emph{Ricci flow} provided
\begin{equation*}
 \frac{\d}{\d t} g_t = -2 \Ric_{g_t}. 
\end{equation*}
This flow was introduced by Hamilton in~\cite{HamiltonRicciFlow}.
When $M$ is a surface this simplifies to
\begin{equation*}
 \frac{\d}{\d t} g_t =- 2 K_{g_t} g_t.
\end{equation*}
We say $(M,g)$ is  a \emph{Ricci soliton} provided there is a vector field $X$ on $M$ and a constant $\lambda$ so that
\begin{equation*}
 -2\Ric_{g}=L_X g-2\lambda g.
\end{equation*}
For such a $g$ the family $g_t=(1-2\lambda t) \phi_t^* g$ is a Ricci flow -- here $\phi_t$ is the flow of $X$.  When $\lambda=0$ the soliton is steady (i.e. of unchanging geometry) while when $\lambda>0$ it is shrinking and when $\lambda<0$ it is expanding. If $X=\gnabla f$ then we say $g$ is a \emph{gradient  Ricci soliton} and $f$ is a \emph{soliton potential}. For such gradient Ricci solitons
\begin{equation*}
 L_X g= 2\gnabla^2 f.
\end{equation*}
So $g$ is a gradient Ricci soliton provided
\begin{equation*}
 \gnabla^2 f+ \Ric_{g}-\lambda g=0.
\end{equation*} 
If $M$ is a surface this implies 
\begin{equation*}
 \gnabla^2 f+ K_{g} g -\lambda g=0
\end{equation*}
which is equivalent to
$$
\Delta_gf =2(\lambda-K_g) \quad \text{and}\quad
^{g}\mathring{\nabla}^2 f=0.
$$ 
Covariant differentiation of the last equation gives (in coordinates)
$$
\gnabla_j\gnabla_k\left(\partial_i f\right)=\frac{1}{2}\left(\partial_k \Delta_g f\right)g_{ij}. 
$$
Using the identity $\gnabla_j\gnabla_k\left(\partial_i f\right)-\gnabla_k\gnabla_j\left(\partial_i f\right)=R^l_{ijk}\partial_l f$, where $R^l_{ijk}$ are the components of the curvature tensor of $g$, we obtain
$$
\left(\frac{1}{2}\partial_k \Delta_g f +K \partial _k f\right)g_{ij}=\left(\frac{1}{2}\partial_j \Delta_g f +K \partial _j f\right)g_{ik}.
$$
Contracting with $g^{ij}$ implies
$$
0=K \d f+\frac{1}{2} \d \Delta_g f=K \d f-\d K. 
$$
Hence near a point $p$ where $K_g\neq 0$ we have
\begin{equation*}
\d f = \d \log |K_g|
\end{equation*}
and so
$$
\gnabla^2 \log | K_g|= \gnabla^2 f= (\lambda- K_g)g. 
$$
From this we see that
\begin{equation*}
 \Delta_g \log | K_g|  =2(\lambda-K_g) \quad \text{and} \quad \mathring{\nabla}_g^2 \log | K_g| =0. 
\end{equation*}
The converse is also true:
\begin{prop}
 Let $(M,g)$ be a (possibly open) Riemmanian surface with $K_g\neq 0$.  It is a gradient Ricci soliton if and only if 
$$\mathring{\nabla}_g^2 \log |K_g| =0\quad \text{and}\quad \Delta_g \log |K_g| =2(\lambda-K_g) $$
 for some $\lambda\in \Real$. Moreover, if $(M,g)$ is a gradient Ricci soliton, then it has soliton potential $\log |K_g|$.  The sign of $\lambda$ depends on whether the soliton is expanding, steady or shrinking.
\end{prop}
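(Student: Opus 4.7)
The forward implication has essentially been carried out in the paragraph preceding the proposition: starting from $\gnabla^2 f + \Ric_g - \lambda g = 0$ on a surface, one covariant differentiation followed by commuting derivatives via the Riemann tensor and tracing with $g^{ij}$ produces the identity $K_g\,\d f = \d K_g$, which integrates to $\d f = \d \log|K_g|$ wherever $K_g \neq 0$. Assuming $M$ connected, this forces $f = \log|K_g| + c$ for some constant $c$. Since a soliton potential is only determined up to an additive constant, $\log|K_g|$ itself is a valid soliton potential. The soliton equation $\gnabla^2 f + K_g g - \lambda g = 0$ then splits under the orthogonal decomposition of symmetric $2$-forms into pure-trace plus trace-free parts as the two conditions $\rgnabla^2 \log|K_g| = 0$ and $\Delta_g \log|K_g| = 2(\lambda - K_g)$.

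For the converse, suppose $\rgnabla^2 \log|K_g| = 0$ and $\Delta_g \log|K_g| = 2(\lambda - K_g)$ for some $\lambda \in \R$. Set $f = \log|K_g|$, which is smooth on $M$ by the hypothesis $K_g \neq 0$. Writing the full Hessian as the sum of its trace-free part and its pure-trace part,
\begin{equation*}
 \gnabla^2 f = \rgnabla^2 f + \tfrac{1}{2}(\Delta_g f)\, g,
\end{equation*}
and substituting the two hypotheses, we obtain $\gnabla^2 f = (\lambda - K_g)\, g$, which rearranges to
\begin{equation*}
 \gnabla^2 f + \Ric_g - \lambda g = 0,
\end{equation*}
using $\Ric_g = K_g g$ in dimension $2$. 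This is precisely the gradient Ricci soliton equation with potential $f = \log|K_g|$ and constant $\lambda$.

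For the sign convention, recall from the discussion preceding the proposition that a gradient Ricci soliton gives rise to the Ricci flow $g_t = (1-2\lambda t)\phi_t^* g$, where $\phi_t$ denotes the flow of $\gnabla f$. The overall conformal factor $1 - 2\lambda t$ is decreasing for $\lambda > 0$ (shrinking), constant for $\lambda = 0$ (steady), and increasing for $\lambda < 0$ (expanding), which classifies the soliton type.

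There is no significant obstacle here — once the Hessian is split into its trace and trace-free parts, the equivalence is essentially a bookkeeping exercise. The only small subtlety lies in the forward direction, where one needs $M$ to be connected to conclude that $f - \log|K_g|$ is a single additive constant rather than merely locally constant; on a disconnected surface the statement should be applied component by component.
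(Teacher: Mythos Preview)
Your proposal is correct and follows the same approach as the paper; in fact the paper only carries out the forward direction (in the paragraph preceding the proposition) and asserts the converse without proof, so your explicit converse argument via the trace/trace-free decomposition of the Hessian is exactly what is needed to complete it. Your remark about connectedness is a valid minor point the paper glosses over.
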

Recall that if $(M,g)$ is a Riemannian surface with $K_g<0$ and $g$ satisfies the Ricci condition \eqref{RicIdent}, then $\hat{g}=|K_g|^{3/4} g$ satisfies $K_{\hat{g}}>0$ and $\Delta_{\hat{g}} \log K_{\hat{g}}=-2K_{\hat{g}}$.  
Hence, a consequence of Theorem \ref{MainCharacterizationThm} and a straightforward computation is: 
\begin{cor}
 The metric of Enneper's surface $g_{enn}$ corresponds to the  cigar soliton metric $g_{cig}$ under the map $g\to |K_g|^{3/4} g$.  Furthermore, homotheties of $g_{enn}$ are the only minimal surface metrics which correspond to  gradient Ricci soliton metrics in this manner.
\end{cor}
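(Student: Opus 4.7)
My plan is to combine the preceding Proposition on gradient Ricci solitons with Theorem~\ref{MainCharacterizationThm}(1) together with a direct Weierstrass computation to identify Enneper's image.

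First I would observe that when $g$ is a minimal surface metric satisfying the Ricci condition with $K_g<0$, the rescaled metric $\hat{g}=|K_g|^{3/4}g$ is a gradient Ricci soliton if and only if the entropy form $T$ of $g$ vanishes identically, and in that case the soliton must be steady. This is immediate from two facts already in hand. On the one hand, $\hat{g}$ is by construction $\mathcal{E}$-critical, so $\Delta_{\hat{g}}\log K_{\hat{g}}=-2K_{\hat{g}}$; on the other hand, the entropy form was defined as $T=\rgnabla^2\log K_{\hat{g}}$ with respect to $\hat{g}$. The preceding Proposition then says that $(M,\hat{g})$ is a gradient Ricci soliton with constant $\lambda$ precisely when $\rgnabla^2\log|K_{\hat{g}}|\equiv 0$ and $\Delta_{\hat{g}}\log|K_{\hat{g}}|=2(\lambda-K_{\hat{g}})$. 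The first equation is exactly $T\equiv 0$, and comparing the second with the $\mathcal{E}$-critical identity forces $\lambda=0$.

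Next, since $T\equiv 0$ is equivalent to $P=T+\I JT\equiv 0$, I would apply Theorem~\ref{MainCharacterizationThm}(\ref{EnneperCase}): any non-flat minimal surface with $P\equiv 0$ is contained, up to rigid motion and homothety, in Enneper's surface. Rigid motions of $\Real^3$ preserve the induced metric, so this yields the uniqueness half of the corollary: homotheties of $g_{enn}$ exhaust the minimal surface metrics whose image under $g\mapsto \hat{g}$ is a gradient Ricci soliton.

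For the first sentence of the corollary I would simply compute $\hat{g}_{enn}$ using the Weierstrass data $(G,\eta)=(z,-z\,\d z)$ on $\mathbb{C}$ (chosen so that $Q=\d z^2$ is nowhere vanishing, exactly as in the proof of Case~\eqref{EnneperCase} of Theorem~\ref{MainCharacterizationThm}). The formulas of Section~\ref{WeierstrassSec} give
\begin{equation*}
 g_{enn}=\tfrac{1}{4}(1+|z|^2)^2\,\d z\circ \d\bar{z},\qquad K_{g_{enn}}=-\tfrac{16}{(1+|z|^2)^4},
\end{equation*}
so $|K_{g_{enn}}|^{3/4}=8(1+|z|^2)^{-3}$ and therefore
\begin{equation*}
 \hat{g}_{enn}=|K_{g_{enn}}|^{3/4}g_{enn}=\tfrac{2}{1+|z|^2}\,\d z\circ \d\bar{z},
\end{equation*}
which is the cigar soliton metric up to the homothety factor $2$.

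Main obstacle: there isn't a serious one. The content of the statement is an unpacking of the definition $T=\rgnabla^2\log K_{\hat{g}}$ against the preceding Proposition, and the identification with the cigar is a one-line Weierstrass calculation. The only step requiring any care is making sure the $\mathcal{E}$-critical Laplacian equation is compatible with the soliton Laplacian equation only for $\lambda=0$, which is where steadiness of the resulting soliton comes from.
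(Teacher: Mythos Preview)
Your argument is correct and is precisely the route the paper intends: the corollary is stated as ``a consequence of Theorem~\ref{MainCharacterizationThm} and a straightforward computation,'' and you have supplied both pieces --- the observation that the soliton condition on $\hat{g}$ reduces via the preceding Proposition to $T\equiv 0$ (with $\lambda=0$ forced by the $\mathcal{E}$-critical equation), and the explicit Weierstrass computation identifying $\hat{g}_{enn}$ with the cigar. One cosmetic remark: when you write $\rgnabla^2\log K_{\hat{g}}$ you mean the trace-free Hessian with respect to $\hat{g}$, which in the paper's notation is ${}^{\hat{g}}\mathring{\nabla}^2$ rather than $\rgnabla^2$.
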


\providecommand{\bysame}{\leavevmode\hbox to3em{\hrulefill}\thinspace}
\providecommand{\MR}{\relax\ifhmode\unskip\space\fi MR }
\providecommand{\MRhref}[2]{%
  \href{http://www.ams.org/mathscinet-getitem?mr=#1}{#2}
}
\providecommand{\href}[2]{#2}

\end{document}